\newtheorem{definition}{Definition}[section]
\newtheorem{theorem}[definition]{Theorem}
\newtheorem{lemma}[definition]{Lemma}
\newtheorem{corollary}[definition]{Corollary}
\theoremstyle{remark}
\newtheorem{remark}[definition]{Remark}
\numberwithin{equation}{section}
\newcommand{\R}{\mathbb{R}}
\title{H\"{o}lder Continuity for Fully Fractional Parabolic Equations with Space-time Nonlocal Operators}
\author[a]{Lingwei Ma}
\author[b]{Qi Xiong}
\author[c]{Zhenqiu Zhang\thanks{Corresponding author.}}
\affil[a]{School of Mathematical Sciences, Nankai University, Tianjin 300071, P.R. China}
\affil[b]{School of Mathematics, Southwest Jiaotong University, Chengdu 610031, Sichuan, P.R. China}
\affil[c]{School of Mathematical Sciences and LPMC, Nankai University, Tianjin 300071, P.R. China}
\date{\today}
\begin{document}
\maketitle
\footnotetext[1]{E-mail: mlw1103@163.com (L. Ma), xq@swjtu.edu.cn (Q. Xiong), zqzhang@nankai.edu.cn (Z. Zhang).}
\begin{abstract}
We study the local H\"{o}lder regularity of weak solutions to the fully fractional parabolic equations involving spatial fractional diffusion and fractional time derivatives of the Marchaud type. It is worth noting that we do not impose boundedness assumptions on the weak solutions and nonhomogeneous terms. Within the space-time nonlocal framework, it is crucial to consider both space-dependent nonlocal tail terms and the first introduced time-dependent nonlocal tail term. By adapting a nonlocal variant of the parabolic De Giorgi iterative technique, we initially establish a priori local boundedness with tail terms for weak solutions and then prove the local H\"{o}lder continuity.
\\

Mathematics Subject classification (2010):  35R11; 26A33; 35B65; 39A22.

Keywords:  Space-time nonlocal parabolic equation; Local boundedness; H\"{o}lder regularity; Nonlocal tails; De Giorgi technique. \\

\end{abstract}


\section{Introduction.}\label{section1}
In this paper, we investigate the space-time nonlocal parabolic equation involving the fractional time derivative and the fractional diffusion in space as follows
\begin{equation}\label{eq1}
\partial^s_t u(t,x) +\mathcal{L} u(t,x) = f(t,x) ~~\text{in}~~I \times \Omega,
\end{equation}
where $I\subset \R$ is a finite open interval, $\Omega\subset\R^d$
is a bounded domain, and the dimension $d \geq2$.
The fractional time derivative $\partial_t^s$ we consider here is the Marchaud fractional derivative of order $s\in (0,1)$, defined by
\begin{equation}\label{fratime}
\partial_t^s u(t,x):=C_{s}\int_{-\infty}^t \frac{u(t, x)-u(\tau ,x)}{(t-\tau)^{1+s}}\operatorname{d}\!\tau,
\end{equation}
which was first introduced by Marchaud in \cite{Mar}. The normalization positive constant $C_s=\frac{s}{\Gamma(1-s)}$, and $\Gamma$ denotes the Gamma function.
The nolocal operator $\mathcal{L}$ is of the form
\begin{equation}\label{eq:L}
\mathcal{L}  u(t,x) = \text{P.V.}\int_{\mathbb{R}^d}(u(t,x)-u(t,y)){K}(t,x,y) \operatorname{d}\!y,
\end{equation}
where $ {K}$ is a nonnegative measurable kernel satisfying the symmetry assumption
\begin{equation}\label{symm}
  {K}(t,x, y) = {K}(t,y, x)
\end{equation}
and the ellipticity condition
\begin{equation}\label{ellip}
  \Lambda^{-1}|x - y|^{-d-2s} \leq  {K}(t,x, y) \leq \Lambda|x - y|^{-d-2s}
\end{equation}
for a.e. $(t, x, y) \in \mathbb{R} \times \mathbb{R}^d \times \mathbb{R}^d$ with some $\Lambda\geq 1$. This integral operator obviously includes the fractional Laplacian operator $(-\Delta)^s$ when the kernel $K(t,x, y)=C_{n,s}|x - y|^{-d-2s}$. The space-time nonlocal equation in \eqref{eq1} serves as  a  prototypical model in the continuous time random walks \cite{MK}, which has diverse applications across physical and biological phenomena, refer to \cite{CM23} for detailed descriptions.
Observe that the definition of the fractional time derivative $\partial_{t}^{\alpha}$ given in \eqref{fratime} looks similar to the one-dimensional fractional Laplacian except that such integral only takes account of the interactions in the past.
This characteristic is attributed to the fact that the transport is irreversible in time and possesses a memory effect.

In the context of the nonlinear partial differential equations, numerous significant regularity results are derived from H\"{o}lder estimates for linear equations with only measurable coefficients.
The pioneering work on this subject dates back to
De Giorgi \cite{DG57}, who proved a local $C^\gamma$ estimate of solutions for second-order homogeneous uniformly elliptic equations in divergence form with measurable coefficients. Alternatively, Nash \cite{N58} independently obtained this H\"{o}lder continuity and established for the parabolic case.
Subsequently, Moser \cite{M64} provided a completely different proof of their results. Nowadays such fundamental regularity methods are known as De Giorgi-Nash-Moser theory, which has been extensively applied to various second-order nonlinear elliptic and parabolic equations, as detailed in the survey by Mingione \cite{M06}.

During the past two decades, the regularity theory for integro-differential equations with merely measurable kernels has attracted widespread attention not only in practical applications, but also from the perspective of pure mathematics. For the linear fractional elliptic equation, Silvestre \cite{S06} and Kassmann \cite{K09} proved local H\"{o}lder continuity of bounded weak solutions by extending the De Giorgi approach and the Moser iteration scheme, respectively.
To remove the priori boundedness assumption of weak solutions, Di Castro, Kuusi, and Palatucci \cite{DKP16} first introduced a nonlocal tail and derived the local H\"{o}lder continuity for the fractional $p$-Laplace elliptic equation based on the De Giorgi theory. For more H\"{o}lder regularity results for diverse types of nonlocal elliptic problems, please refer to \cite{BKO23, C17, KKP16, KuMinSi} and references therein.
With respect to the H\"{o}lder regularity results for fractional parabolic equations with the local time derivative $\partial_t u(x, t)$, comprehensive investigations have been carried out utilizing the modified De Giorgi method (cf.  \cite{BK24, CCV11, DZZ21, GK22, KW23, L24, NNSW23}). On the other hand, the relevant H\"{o}lder estimates were obtained in \cite{FK13, KW22} via the Moser technique.

In the pure time fractional case, Zacher et al. \cite{KRZ24, Z13} established the H\"{o}lder continuity of bounded solutions to local diffusion equations involving the Caputo derivative, with the results depending on the initial and boundary conditions. Regarding the space-time nonlocal parabolic equation analogous to type \eqref{eq1}, Allen, Caffarelli and Vasseur \cite{ACV16} showed a H\"{o}lder continuity of global weak solutions when the nonhomogeneous term $f$ is bounded.
In contrast, the local H\"{o}lder estimate for weak solutions to \eqref{eq1} seems to be rather incomplete, particularly in the absence of a priori boundedness assumption on weak solutions or without initial and boundary data.

Based on the De Giorgi approach modified to the space-time nonlocal setting, this paper aims to establish the local boundedness
and the local H\"{o}lder regularity of weak solutions for the nonhomogeneous fractional parabolic equations \eqref{eq1} with merely measurable kernel.
The main difficulty arises in the nonlocal nature of both the fractional time derivative and the fractional diffusion in the equation \eqref{eq1}, so it is necessary to take into account long range interactions both in time and space.
In order to obtain the desired local estimates, we highlight two quantities that will play crucial roles in the present extended local theory. One of them is the nonlocal tail term related to space, given by
\begin{equation}\label{Tail}
\operatorname{Tail}(u(t);x_0,r) = r^{2s} \int_{\R^d \setminus B_r(x_0)} \frac{|u(t,y)|}{|x_0-y|^{d+2s}} \operatorname{d}\! y.
\end{equation}
This quantity was originally detailed in \cite{DKP16}.
Furthermore,
the memory strongly affects the estimates,
so we also need to introduce a novel nonlocal tail term associated with time, defined as
\begin{equation}\label{tail}
\operatorname{tail}(u(x);t_0,r) = r^{2s} \int_{-\infty}^{t_0-r^2} \frac{|u(\tau,x)|}{(t_0-\tau)^{1+s}} \operatorname{d}\! \tau.
\end{equation}

To illustrate the main results of this paper, we start by presenting the definition of local weak solutions to the equation \eqref{eq1},
which utilizes the integration by parts formula for the Marchaud fractional derivative mentioned in \cite{A19}.
\begin{definition} Let $f\in L_{\rm loc}^2(I; L_{\rm loc}^2(\Omega))$. A function
$$u\in L_{\rm loc}^{\infty}(I;L_{\rm loc}^2(\Omega))\cap H_{\rm loc}^{\frac{s}{2}}(I;L_{\rm loc}^2(\Omega))\cap {\mathcal L}^-_{s}(\mathbb{R};L_{\rm loc}^\infty(\Omega))\cap L_{\rm loc}^2(I;H_{\rm loc}^{s}(\Omega))\cap L_{\rm loc}^\infty(I;{\mathcal L}^1_{2s}(\mathbb{R}^d))$$
 is called a local weak subsolution to \eqref{eq1}, if for any nonnegative test functions $\varphi\in L^2(I; H^s(\Omega)) \cap C^1(I;L^2(\Omega))$ with compact support
 $\varphi \equiv 0$ in $\left((t_1,t_2) \times \mathbb{R}^d\setminus \Omega'\right) \cup\left( (-\infty,t_1] \times \Omega' \right)$, where $(t_1,t_2)\times\Omega' \Subset I\times\Omega$,
	we have
	\begin{eqnarray*}
		&& -\int_{t_1}^{t_2} \int_{\Omega'} u(t,x)\partial^s_t\varphi(t,x) \operatorname{d}\! x \operatorname{d}\!t +\int_{t_1}^{t_2} \int_{\Omega'} \frac{u(t,x)\varphi(t,x)}{(t_2-t)^s} \operatorname{d}\!x\operatorname{d}\!t\\
&&+C_s
\int_{\Omega'} \int_{t_1}^{t_2}\int_{-\infty}^{t} \frac{(u(t,x)-u(\tau,x))(\varphi(t,x)-\varphi(\tau,x))}{(t-\tau)^{1+s}}\operatorname{d}\!\tau
\operatorname{d}\!t\operatorname{d}\!x\\
&& + \int_{t_1}^{t_2} \int_{\mathbb{R}^d} \int_{\mathbb{R}^d} (u(t,x)-u(t,y))(\varphi(t,x)-\varphi(t,y)) K(t,x,y) \operatorname{d}\!y \operatorname{d}\!x \operatorname{d}\! t \\
&\leq&\int_{t_1}^{t_2} \int_{\Omega'}f\varphi\operatorname{d}\!x \operatorname{d}\! t  .
\end{eqnarray*}
Here, $H^s$ and $H^{\frac{s}{2}}$ denote the fractional Sobolev spaces $W^{s,2}$ and $W^{\frac{s}{2},2}$ respectively.
${\mathcal L}^1_{2s}(\mathbb{R}^d)$ represents the nonlocal tail space defined as
	$${\mathcal L}^1_{2s}(\mathbb{R}^d):= \left \{v \in L^1_{\operatorname{loc}}(\mathbb{R}^d) \mathrel{\Big|} \int_{\mathbb{R}^d} \frac{|v(y)|}{1+|y|^{d+2s}} \operatorname{d}\!y < \infty \right \},$$
and ${\mathcal L}^-_{s}(\mathbb{R})$ is a class of slowly increasing functions given by
$$ {\mathcal L}^-_{s}(\mathbb{R}):=\{v\in L^1_{\rm loc} (\mathbb{R}) \mid \int_{-\infty}^t \frac{|v(\tau)|}{1+|\tau|^{1+s}}\operatorname{d}\!\tau<+\infty\,\, \mbox{for any} \,\, t\in I\}.$$
A function $u$ is called a local weak supersolution if the previous formula holds for every nonpositive test function $\varphi$. If a function $u$ is both a local weak subsolution and a local weak supersolution, then it is referred to as a local weak solution.
\end{definition}

In what follows, we denote the parabolic cylinder by
$Q_r(t_0,x_0):= I_r^{\ominus}(t_0) \times B_r(x_0)$, where
$I_r^{\ominus}(t_0) := (t_0-r^{2},t_0)$ and $B_r(x_0)$ represents an open ball in $\R^d$ with center $x_0$ and radius $r>0$. Now we are in a position to present the main results of this paper. The first one describes the local boundedness estimate of weak solutions to \eqref{eq1} in the a priori way.

\begin{theorem}
\label{lemma:locbd}
Let $u$ be a local weak solution to \eqref{eq1} in $I\times\Omega$ with
$f\in L_{\rm loc}^\infty(I;L_{\rm loc}^{\frac{d}{s}}(\Omega))$,
then for any $R>0$ and $(t_0,x_0)\in I\times\Omega$ with $I^\ominus_{2R}(t_0)\times B_{2R}(x_0)\Subset I\times\Omega$, there exists a positive constant $C$, depending only on $d,\,s,\,\Lambda$, such that
\begin{eqnarray}\label{locbd}
  \sup_{Q_{\frac{R}{2}}(t_0,x_0)} |u| &\leq& C\left( \fint_{Q_R(t_0,x_0)} |u(t,x)| \operatorname{d}\!x\operatorname{d}\!t +  \sup_{t\in I_R^{\ominus}(t_0)} \operatorname{Tail}(u(t);x_0,R)+ \sup_{x\in B_R(x_0)} \operatorname{tail}(u(x);t_0,R)\right.\nonumber\\
  &&\left.+R^s\|f\|_{L^\infty(I_R^{\ominus}(t_0);L^{\frac{d}{s}}(B_R(x_0)))}\right).
\end{eqnarray}
\end{theorem}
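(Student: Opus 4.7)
The plan is to run a De Giorgi iteration adapted to the space-time nonlocal setting, with both the spatial tail $\operatorname{Tail}$ and the newly introduced time tail $\operatorname{tail}$ absorbing the long-range contributions. After translation I may assume $(t_0,x_0)=(0,0)$. Set $r_j := \frac{R}{2}(1 + 2^{-j})$, so $Q_j := I_{r_j}^{\ominus}(0) \times B_{r_j}(0)$ decreases to $Q_{R/2}$, and pick levels $k_j := k(1 - 2^{-j})$ increasing from $0$ to a threshold $k>0$ to be chosen. Put $w_j := (u - k_j)_+$ and choose spatial cutoffs $\eta_j \in C_c^\infty(B_{r_j})$ with $\eta_j \equiv 1$ on $B_{r_{j+1}}$, together with time cutoffs $\psi_j \in C^\infty(\mathbb{R})$ satisfying $\psi_j \equiv 0$ on $(-\infty,-r_j^2]$ and $\psi_j \equiv 1$ on $[-r_{j+1}^2,0]$, with the natural decay rates $|\nabla\eta_j|\lesssim 2^j/R$ and $|\psi_j'|\lesssim 4^j/R^2$. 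The target is a recursion $A_{j+1} \le C b^j k^{-\sigma} A_j^{1+\sigma}$ for $A_j := \fint_{Q_j} w_j^2$, which by the classical De Giorgi iteration lemma forces $A_j \to 0$ whenever $k$ exceeds a universal multiple of $(\fint_{Q_R} u^2)^{1/2}$ plus the tail and $f$ contributions, yielding $u \le k$ on $Q_{R/2}$. The passage to the $L^1$-based form of (\ref{locbd}) is then a standard interpolation argument on the family of nested cylinders $Q_\rho$, $\rho \in [R/2, R]$, via a Giaquinta-type iteration lemma combined with $\|u\|_{L^2} \le \|u\|_{L^\infty}^{1/2}\|u\|_{L^1}^{1/2}$. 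The lower bound follows by applying the same scheme to $-u$, which solves the equation with source $-f$ by the symmetry \eqref{symm}.

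The core step is a Caccioppoli-type energy estimate obtained by testing the weak subsolution formulation against $\varphi := w_{j+1}\eta_j^2\psi_j^2$. The spatial double integral, via the algebraic splitting of Di Castro--Kuusi--Palatucci \cite{DKP16} together with the ellipticity bound \eqref{ellip}, produces a positive Gagliardo term controlling $[w_{j+1}\eta_j\psi_j]_{H^s_x}^2$ plus a remainder bounded by $\sup_t\operatorname{Tail}(u(t);0,R)$ against a weighted $L^1$-norm of $w_{j+1}$. For the time piece, integration by parts for the Marchaud derivative combined with the built-in contribution $\int u\varphi(t_2-t)^{-s}$ in the definition of weak subsolution supplies a positive term controlling $\sup_t\|w_{j+1}(t)\|_{L^2(B_{r_{j+1}})}^2$ along with the time-fractional energy $[w_{j+1}\eta_j\psi_j]^2_{H^{s/2}_t L^2_x}$, up to a history remainder absorbed into $\sup_x \operatorname{tail}(u(x);0,R)$. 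The source term $fw_{j+1}$ is handled by H\"older with $f \in L^\infty_t L^{d/s}_x$ and the Sobolev embedding $H^s_x \hookrightarrow L^{2d/(d-2s)}_x$ (well-defined since $d \ge 2 > 2s$), producing the factor $R^s\|f\|_{L^\infty L^{d/s}}$. Interpolating $L^\infty_t L^2_x \cap L^2_t H^s_x \subset L^{2(d+2s)/d}_{t,x}$ via the parabolic Gagliardo--Nirenberg inequality, and using Chebyshev on the nesting $\{w_{j+1}>0\} \subset \{w_j > k 2^{-j-2}\}$, closes the recursion for $A_j$.

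The main technical hurdle is extracting a clean positive contribution from the Marchaud time term, since $\partial_t^s (u-k)_+ \ne \mathbf{1}_{\{u>k\}}\partial_t^s u$ and the integration spans the entire past $(-\infty, t)$. The remedy is a splitting $\int_{-\infty}^t = \int_{-r_j^2}^t + \int_{-\infty}^{-r_j^2}$: on the inner piece, the Kato-type inequality $(a-b)\bigl((a-k)_+-(b-k)_+\bigr) \ge \bigl((a-k)_+-(b-k)_+\bigr)^2$ combined with the extra $(t_2-t)^{-s}$ boundary term delivers the desired positive time-energy and the $\sup_t L^2_x$ control; the outer piece, which involves arbitrarily old values of $u$, is precisely what the new time-tail $\operatorname{tail}(u(x);0,R)$ was designed to absorb. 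Engineering the test function and cutoff so that this splitting is compatible with positivity and the admissibility conditions in the definition of weak subsolution is the delicate point; once carried out, the remaining ingredients (fractional Sobolev embedding, De Giorgi iteration lemma, Giaquinta-type interpolation) are by now standard in the nonlocal literature.
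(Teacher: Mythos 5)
Your overall scaffolding — Caccioppoli with tails, fractional Sobolev embedding, De Giorgi iteration on nested cylinders, Giaquinta-type interpolation to pass from $L^2$ to $L^1$, and applying the same scheme to $-u$ — matches the paper. But there is one genuine gap at the heart of the energy estimate, and it is precisely the point where the Marchaud derivative behaves unlike $\partial_t$.

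You claim that the time piece "delivers the desired positive time-energy and the $\sup_t \|w_{j+1}(t)\|_{L^2(B_{r_{j+1}})}^2$ control," and then build the iteration on the parabolic Gagliardo--Nirenberg embedding $L^\infty_t L^2_x \cap L^2_t H^s_x \subset L^{2(d+2s)/d}_{t,x}$. For the classical $\partial_t u$ this $\sup_t$ bound comes from testing on $(t_1, t^*)$ for each $t^*$ and evaluating $\frac{1}{2}\int_{B}(u-k)_+^2$ at $t^*$; for the Marchaud derivative this mechanism is unavailable, because the history integral spans $(-\infty,t)$ and there is no fundamental-theorem-of-calculus identity. What the Caccioppoli inequality actually yields, via the product rule $w_+\,\partial_t^s w_+ = \tfrac12\partial_t^s(w_+^2) + \tfrac{C_s}{2}\int \frac{(w_+(t)-w_+(\tau))^2}{(t-\tau)^{1+s}}\,\mathrm{d}\tau$ together with the boundary term $\int \frac{w_+^2\eta^2\phi^2}{(t_0-t)^s}$ and Lemma~\ref{extendt}, is control of the time-fractional seminorm $[w_+\eta]^2_{H^{s/2}_t L^2_x}$ — not of $\sup_t\|w_+(t)\|_{L^2_x}$. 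The weighted $L^2_t$ norm with weight $(t_0-t)^{-s}$ does not upgrade to $L^\infty_t$. The paper therefore uses the two one-parameter embeddings $H^{s/2}_t L^2_x \hookrightarrow L^{2/(1-s)}_t L^2_x$ and $L^2_t H^s_x \hookrightarrow L^2_t L^{2d/(d-2s)}_x$, combined by H\"older and Minkowski, which produces the (strictly smaller) iteration exponent $p=\frac{d+2}{d+2(1-s)}$ rather than your $\frac{d+2s}{d}$; the De Giorgi recursion then closes with $\sigma = 1/p' = 2s/(d+2)$. Your Kato-type pointwise inequality for the time term is also not what the paper uses there (it appears only in the spatial DKP splitting); the time term is handled with the Marchaud product rule together with the observation $-w_+\partial_t^s w_-\ge 0$. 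To repair your argument you should replace the claimed $\sup_t L^2_x$ control by the $H^{s/2}_t L^2_x$ seminorm and redo the interpolation step, after which the De Giorgi recursion, the Giaquinta iteration, and the two-sided conclusion go through as you describe.
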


By virtue of the aforementioned $L^\infty-L^1$ estimate with tails, and together with a  growth lemma developed below, we can further derive the local parabolic H\"{o}lder continuity of weak solutions to \eqref{eq1}.

\begin{theorem}
	\label{prop:Holderq}
 Let $d>2$ and $u$ be a local weak solution to \eqref{eq1} in $I\times\Omega$ with
$f\in L_{\rm loc}^\infty(I;L_{\rm loc}^{\frac{d}{s}}(\Omega))$,
then $u \in C^{\frac{\gamma}{2},\gamma}_{\rm loc}(I\times\Omega)$ for some $\gamma=\gamma(d,s,\Lambda) \in (0,1)$, and satisfies
	\begin{eqnarray*}
   [ u ]_{C^{\frac{\gamma}{2},\gamma}(Q_{R}(t_0,x_0))}&\leq& C R^{-\gamma}  \fint_{Q_{2R}(t_0,x_0)} |u(t,x)|\operatorname{d}\! x \operatorname{d}\! t   \\
    &&  + CR^{-\gamma}  \left(  \sup_{t\in I_{2R}^{\ominus}(t_0)} \operatorname{Tail}(u(t);x_0,2R)+ \sup_{x\in B_{2R}(x_0)} \operatorname{tail}(u(x);t_0,2R)\right)\\
    &&+CR^{s-\gamma}\|f\|_{L^\infty(I_{2R}^{\ominus}(t_0);L^{\frac{d}{s}}(B_{2R}(x_0)))}
 \end{eqnarray*}
for any $R>0$ and $(t_0,x_0)\in I\times\Omega$ with $(t_0-(2R)^2,t_0+(2R)^2)\times B_{2R}(x_0)\Subset I\times\Omega$, where
\begin{equation*}
  [ u ]_{C^{\frac{\gamma}{2},\gamma}(Q_{R}(t_0,x_0))}:=\sup_{\substack{(t,x),(\tau,y)\in Q_{R}(t_0,x_0)\\ (t,x)\neq(\tau,y) } }\frac{|u(t,x)-u(\tau,y)|}{\left(|t-\tau|^{\frac{1}{2}}+|x-y|\right)^\gamma},
\end{equation*}
and the positive constant $C$ depends only on $d,\,s,\,\Lambda$.
\end{theorem}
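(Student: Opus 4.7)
The plan is to derive the Hölder estimate via a classical De Giorgi oscillation-reduction argument adapted to the space-time nonlocal setting, iterated on a geometric sequence of parabolic cylinders. By translation we may assume $(t_0,x_0)=(0,0)$. With a small $\lambda\in(0,\tfrac14)$ to be fixed later, set $\rho_j:=\lambda^jR$, $Q_j:=Q_{\rho_j}$ and $\omega_j:=\sup_{Q_j}u-\inf_{Q_j}u$. The target is the recursion
\begin{equation*}
\omega_{j+1}\;\leq\;\eta\,\omega_j+C\lambda^{js}\mathcal A,\qquad \eta=\eta(d,s,\Lambda)<1,
\end{equation*}
where $\mathcal A$ abbreviates the right-hand side of the claimed inequality (the mean of $|u|$ on $Q_{2R}$, the two tails at scale $2R$, and $R^s\|f\|_{L^\infty_tL^{d/s}_x(Q_{2R})}$). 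A routine iteration of such a recursion produces $\omega_j\leq C\lambda^{j\gamma}\mathcal A$ for some $\gamma=\gamma(d,s,\Lambda)\in(0,1)$, which via the parabolic-cylinder characterization of $C^{\gamma/2,\gamma}$ and translation invariance delivers the stated seminorm bound.

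To prove the one-step decay at scale $j$, I rescale $Q_j$ to $Q_1$ via $v(t,x):=u(\rho_j^2t,\rho_jx)$. Then $v$ solves an equation of the same structure with kernel $\widetilde K(t,x,y):=\rho_j^{d+2s}K(\rho_j^2t,\rho_jx,\rho_jy)$ still obeying \eqref{symm}--\eqref{ellip} with the same $\Lambda$, and right-hand side of $L^\infty_tL^{d/s}_x(Q_1)$-norm $\rho_j^s\|f\|_{L^\infty_tL^{d/s}_x(Q_{\rho_j})}$. Normalize further via $w:=\tfrac{2}{\omega_j}\bigl(v-\tfrac{M_j+m_j}{2}\bigr)$ so that $|w|\leq 1$ on $Q_1$, and observe that at least one of the sets $\{w\geq 0\}\cap Q_1^-$, $\{w\leq 0\}\cap Q_1^-$ has measure $\geq\tfrac12|Q_1^-|$. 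Applying the growth lemma to $w$ (or $-w$) at level $1$ then yields $w\leq 1-\sigma$ on a smaller forward cylinder for some universal $\sigma=\sigma(d,s,\Lambda)\in(0,1)$, \emph{modulo} contributions from $\operatorname{Tail}(w;0,1)$, $\operatorname{tail}(w;0,1)$, and the rescaled source. Undoing the normalization produces $\omega_{j+1}\leq(1-\tfrac\sigma2)\omega_j+\text{(error terms at scale $j$)}$.

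The crux is the uniform control of the two tails. Direct change-of-variable computations yield the scaling identities $\operatorname{Tail}(v(t);0,1)=\operatorname{Tail}(u(\rho_j^2t);0,\rho_j)$ and $\operatorname{tail}(v(x);0,1)=\operatorname{tail}(u(\rho_jx);0,\rho_j)$. Splitting $\R^d\setminus B_{\rho_j}$ into the annuli $B_{\rho_k}\setminus B_{\rho_{k+1}}$ for $k=0,\ldots,j-1$ together with $\R^d\setminus B_R$, and using $|u-\tfrac{M_j+m_j}{2}|\leq\omega_k$ on $B_{\rho_k}$ (because $\tfrac{M_j+m_j}{2}\in[m_k,M_k]$), gives
\begin{equation*}
\omega_j\operatorname{Tail}(w;0,1)\;\leq\;C\sum_{k=0}^{j-1}\lambda^{2s(j-k-1)}\omega_k+C\lambda^{2sj}\mathcal A,
\end{equation*}
and an analogous decomposition of $(-\infty,-\rho_j^2]$ into $[-\rho_k^2,-\rho_{k+1}^2)$, exploiting the exact homogeneity of the weight $(-\tau)^{-1-s}d\tau$, delivers the parallel bound for $\omega_j\operatorname{tail}(w;0,1)$. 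Plugging these into the one-step inequality closes the recursion under the ansatz $\omega_j\leq C\lambda^{j\gamma}\mathcal A$ once $\gamma$ is chosen smaller than $\min\{s,-\log(1-\sigma/2)/\log(1/\lambda)\}$ and $\lambda$ is small enough; the finiteness of every $\omega_j$ needed to start the induction is granted by Theorem~\ref{lemma:locbd}.

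The hardest step I anticipate is the simultaneous propagation of \emph{both} tails through the iteration, and particularly the novel time tail: the Marchaud derivative's infinite memory means that the tail at every scale $\rho_j$ inherits contributions from \emph{all} previous cylinders $Q_0,\dots,Q_{j-1}$, so the geometric weight $\lambda^{2s(j-k)}$ must outpace the decay $\lambda^{k\gamma}$ one is trying to prove. This constraint forces $\gamma<2s$ and couples the admissible $\lambda$ to the constant $\sigma$ produced by the growth lemma, so the two parameters have to be selected jointly rather than independently. Once this joint choice is made, the passage from the Hölder estimate at the origin to the full seminorm bound on $Q_R(t_0,x_0)$ is routine.
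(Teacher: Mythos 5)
Your proposal follows essentially the same route as the paper: a De Giorgi oscillation-decay iteration on a geometric sequence of parabolic cylinders, powered by a measure-to-point growth lemma (the paper's Lemma~\ref{lemma:growth-lemma1}, Lemma~\ref{lemma:growth-lemma2}, and Corollary~\ref{cor:growth-lemma}), with the crux being propagation of the space and time tails across scales. Your annular decompositions of $\R^d\setminus B_{\rho_j}$ and $(-\infty,-\rho_j^2]$ are precisely the ``straightforward calculation'' the paper invokes when verifying the smallness of $\operatorname{Tail}((u-m_j)_-(t);x_0,2\nu^{-j}R)$, $\operatorname{tail}((u-m_j)_-(x);t_0,2\delta\nu^{-j}R)$ and the $f$-condition, so in that respect you supply a detail the paper only gestures at.

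Two points where you are less careful than the paper. First, the paper's growth lemma is not applied with $L^\infty$ tails but with $L^{d/s}$-averaged tails, because Hölder's inequality against the source term $f$ (and the exponent $q=d/s$) is built into Lemma~\ref{lemma:growth-lemma1}; this is also the origin of the restriction $\gamma<\min\{4s/(d+2),\,(d-2)s/d\}$, which is strictly stronger than your $\gamma<s$ (and than your $\gamma<2s$). Your pointwise annular bound is still compatible with this, but you would need to take the $L^{d/s}$ average in the end, and the admissible $\gamma$ you can extract will be the paper's smaller one, not $s$. Second, and more substantively, you gloss over the time-shifting inherent in the parabolic growth lemma: Corollary~\ref{cor:growth-lemma} locates a time $t_1$ \emph{inside} $I_R^\ominus(t_0)$ where a time slice has high density, and the conclusion is a cylinder anchored at $t_1$, not at $t_0$. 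To conclude on $Q_{\nu^{-(j+1)}R}(t_0,x_0)$, the paper therefore posits the measure hypothesis on a forward-shifted cylinder $Q_{\nu^{-j}R}(\bar t,x_0)$ with $\bar t\in(t_0,t_0+(\nu^{-j}R)^2)$, which is exactly why the theorem's hypothesis requires the double-sided cylinder $(t_0-(2R)^2,t_0+(2R)^2)\times B_{2R}(x_0)\Subset I\times\Omega$. Your passing mention of ``a smaller forward cylinder'' suggests awareness of this, but the bookkeeping needed to make the cylinders nest correctly (and to guarantee $u\geq 0$ on the shifted $I^\ominus_{2R}(t_1)\times B_{2R}$ when the growth lemma is invoked) is genuinely delicate here and is worth spelling out, since without it the induction does not close.
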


\begin{remark}
Note that the $L^\infty$ tails
$$ \displaystyle\sup_{t\in I_r^{\ominus}(t_0)} \operatorname{Tail}(u(t);x_0,r) \,\,\, \mbox{and} \,\,\displaystyle\sup_{x\in B_r(x_0)} \operatorname{tail}(u(x);t_0,r)$$ are finite whenever $u$ belongs to ${\mathcal L}^-_{s}(\mathbb{R};L^\infty(B_r(x_0)))$ and $L^\infty(I_r^{\ominus}(t_0);{\mathcal L}^1_{2s}(\mathbb{R}^d))$ respectively.
Analogous outcomes with more natural $L^2$-Tail have recently been proved in \cite{KW23, NNSW23} for parabolic equations involving the standard local time derivative, the treatment of local time derivatives in the aforementioned studies is not applicable to the parabolic equations considered in this paper, which possess derivatives with respect to time of fractional order.
Although we do not provide more natural assumptions on the tail terms, our results are still novel for the fully fractional parabolic equation.
\end{remark}

\begin{remark}
The previous results are new even for the fully fractional parabolic equation \eqref{eq1} with $f\equiv 0$ or a bounded nonhomogeneous term. In both scenarios, the condition $d>2$ in Theorem \ref{prop:Holderq} can be relaxed to $d\geq2$. Furthermore,
with only marginal modifications to the proof, the main results remain valid for the local weak solution
$$u\in L_{\rm loc}^{\infty}(I;L_{\rm loc}^2(\Omega))\cap H_{\rm loc}^{\frac{\alpha}{2}}(I;L_{\rm loc}^2(\Omega))\cap {\mathcal L}^-_{\alpha}(\mathbb{R};L_{\rm loc}^\infty(\Omega))\cap L_{\rm loc}^2(I;H_{\rm loc}^{s}(\Omega))\cap L_{\rm loc}^\infty(I;{\mathcal L}^1_{2s}(\mathbb{R}^d))$$
of
\begin{equation*}
\partial^\alpha_t u(t,x) +\mathcal{L} u(t,x) = f(t,x) ~~\text{in}~~I \times \Omega,
\end{equation*}
where the orders are decoupled in time $\alpha\in (0,1)$ and in space $s\in(0,1)$.
More precisely, at this point, the parabolic cylinder given by
$$Q_r(t_0,x_0):= (t_0-r^{\frac{2s}{\alpha}},t_0) \times B_r(x_0),$$
the constants $C$ and $\gamma$ in Theorem \ref{lemma:locbd} and Theorem \ref{prop:Holderq} also depend on $\alpha$, and the definition of the tail term associated with time is replaced by
\begin{equation}\label{tail}
\operatorname{tail}(u(x);t_0,r) = r^{2s} \int_{-\infty}^{t_0-r^{\frac{2s}{\alpha}}} \frac{|u(\tau,x)|}{(t_0-\tau)^{1+\alpha}} \operatorname{d}\! \tau.
\end{equation}
Here we assume the orders $\alpha=s$ to make the proof more concise and clear.
\end{remark}

The remainder part of this paper proceeds as follows. Section \ref{section2} consists of some preliminary results and establishes the Caccioppoli inequality with tails. Section \ref{section3} is devoted to proving the local boundedness stated in Theorem \ref{lemma:locbd}. In section \ref{section4}, we finally complete the proof of Theorem \ref{prop:Holderq} regarding H\"{o}lder continuity.

\section{Preliminaries}\label{section2}

In this section, we collect several useful auxiliary results and establish the Caccioppoli-type inequality for the space-time nonlocal parabolic equation \eqref{eq1}. These results are crucial for the proof of our main theorems.

\subsection{Auxiliary results}

Let us start with two classical fractional Sobolev inequalities established in \cite{DPV12}, where the second embedding theorem can be derived by using scaling argument.

\begin{lemma}\label{fracsobolev} {\rm(cf. \cite[Theorem 6.5]{DPV12})}
Let $s\in(0,1)$ and $n,\,p\in[1,+\infty)$ be such that $sp<n$. Then
for any compactly supported function $f\in W^{s,p} (\R^n)$, there holds that
\begin{equation*}
  \|f\|_{L^{\frac{np}{n-sp}}(\R^n)}^p\leq C\int_{\R^n}\int_{\R^n}\frac{|f(x)-f(y)|^p}{|x-y|^{n+sp}}\operatorname{d}\! x \operatorname{d}\! y,
\end{equation*}
where the positive constant $C$ only depending on $n,\,p$ and $s$.
\end{lemma}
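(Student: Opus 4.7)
The plan is to follow the direct, intrinsic strategy employed in DPV12: first prove the endpoint case $p=1$ via a dyadic level-set/nonlocal isoperimetric argument, and then bootstrap to general $p\in[1,n/s)$ by testing the $p=1$ inequality against $|f|^{\gamma}$ for a suitable $\gamma$. This route avoids Fourier analysis and Riesz-potential estimates, so the constant depends only on $n,p,s$ and the argument works uniformly for all admissible $p$. By density and by writing $f=f^+ - f^-$, I may assume $f\in C_c^\infty(\R^n)$ with $f\geq 0$.

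For the case $p=1$, introduce the dyadic level sets $A_k:=\{f>2^k\}$ and the truncations $f_k:=\min\bigl((f-2^k)_+,2^k\bigr)$, which vanish on $A_k^c$ and equal $2^k$ on $A_{k+1}$. The pointwise bound $|f_k(x)-f_k(y)|\geq 2^k\mathbf{1}_{A_{k+1}}(x)\mathbf{1}_{A_k^c}(y)$, combined with a nonlocal isoperimetric-type estimate, yields
\begin{equation*}
 2^k\,|A_{k+1}|^{(n-s)/n}\;\leq\; C\iint_{A_{k+1}\times A_k^c}\frac{|f_k(x)-f_k(y)|}{|x-y|^{n+s}}\,\operatorname{d}\! x\,\operatorname{d}\! y.
\end{equation*}
Writing $\|f\|_{L^{n/(n-s)}(\R^n)}^{n/(n-s)}\sim\sum_k 2^{kn/(n-s)}|A_k\setminus A_{k+1}|$, summing the displayed inequality over $k\in\mathbb{Z}$, and noting that each pair $(x,y)$ with $f(x)>f(y)$ is counted a bounded number of times, bounds the right-hand side by the Gagliardo $W^{s,1}$-seminorm of $f$, giving the $p=1$ case.

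For general $p\in(1,n/s)$, apply the $p=1$ inequality to $g:=f^{\gamma}$ with $\gamma:=\frac{p(n-s)}{n-sp}$, chosen precisely so that $\gamma\cdot\frac{n}{n-s}=\frac{np}{n-sp}=:p^*$. The elementary bound $|a^\gamma-b^\gamma|\leq\gamma(a^{\gamma-1}+b^{\gamma-1})|a-b|$ (for $a,b\geq 0$, $\gamma\geq 1$) together with Hölder's inequality applied symmetrically on the double integral, distributing the exponents $p$ and $p'$ between the $f^{\gamma-1}$ factor (controlled by $\|f\|_{L^{p^*}}^{\gamma-1}$ since $(\gamma-1)p'=p^*$) and the $|f(x)-f(y)|$ factor (controlled by $[f]_{W^{s,p}}$), produces
\begin{equation*}
 \|f\|_{L^{p^*}(\R^n)}^{\gamma}\;\leq\; C\,\|f\|_{L^{p^*}(\R^n)}^{\gamma-1}\,[f]_{W^{s,p}(\R^n)},
\end{equation*}
and dividing by $\|f\|_{L^{p^*}}^{\gamma-1}$ yields the stated inequality (the division is legitimate by the density/approximation since $f\in C_c^\infty$).

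The principal obstacle is the fractional isoperimetric-type estimate at the heart of the $p=1$ step: bounding $|A|^{(n-s)/n}$ by the cross-layer nonlocal perimeter requires a genuinely nonlocal argument (direct Riesz-kernel computation, nonlocal coarea, or rearrangement) and the constants must be tracked uniformly in $k$ so that the sum over $k\in\mathbb{Z}$ converges. The bootstrap step is conceptually routine, but the Hölder splitting must be carried out symmetrically in $(x,y)$ and with the correct choice $(\gamma-1)p'=p^*$, which is the only algebraic identity making the $L^{p^*}$ factor absorbable into the left-hand side.
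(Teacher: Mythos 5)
The paper does not prove this lemma itself; it is cited verbatim from \cite[Theorem 6.5]{DPV12}, where the argument is a direct cube--decomposition estimate that treats all $p\in[1,n/s)$ at once. Your strategy (dyadic level sets plus a fractional isoperimetric inequality for $p=1$, then bootstrapping via $|f|^\gamma$) is a genuinely different route, but as written it has two concrete gaps.

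First, the displayed inequality at level $k$ does not follow from the fractional isoperimetric inequality. The isoperimetric inequality controls $|A_{k+1}|^{(n-s)/n}$ by the \emph{full} fractional perimeter $\iint_{A_{k+1}\times A_{k+1}^c}\frac{dx\,dy}{|x-y|^{n+s}}$, whereas your bound restricts $y$ to the strictly smaller set $A_k^c\subsetneq A_{k+1}^c$. This restricted version is a strictly stronger claim and is in fact false: take $A_{k+1}=B_1$ and $A_k=B_R$ with $R$ large (realized, e.g., by $f=2^{k+2}\mathbf 1_{B_1}+2^{k+1/2}\mathbf 1_{B_R\setminus B_1}$). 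Then $\iint_{B_1\times B_R^c}|x-y|^{-n-s}\,dx\,dy\lesssim R^{-s}\to 0$ while $|B_1|^{(n-s)/n}$ is fixed. Replacing $A_k^c$ by $A_{k+1}^c$ repairs the isoperimetric step but destroys the summation estimate on the right-hand side, because the resulting weight $\sum_{2^{k+1}\in(f(y),f(x)]}2^k$ is \emph{not} bounded by a multiple of $|f(x)-f(y)|$ when $f(y)$ is close to $f(x)$ with a dyadic threshold in between. (Your phrase ``counted a bounded number of times'' is also not literally true; the number of admissible $k$ grows like $\log_2(f(x)/f(y))$, although the weighted sum over those $k$ is indeed controlled.) The clean $p=1$ argument avoids dyadics entirely and uses the fractional coarea identity $[f]_{W^{s,1}}=2\int_0^\infty P_s(\{f>t\})\,dt$ together with the isoperimetric bound and a Lorentz/Hardy inequality.

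Second, the bootstrap to $p>1$ does not go through by H\"older as you describe. In the local case, the chain rule $\nabla|f|^\gamma=\gamma|f|^{\gamma-1}\nabla f$ is a \emph{pointwise} product, so H\"older cleanly separates $\| |f|^{\gamma-1}\|_{L^{p'}}$ from $\|\nabla f\|_{L^p}$. In the fractional case the Gagliardo seminorm is a double integral with kernel $|x-y|^{-n-s}$, and any split $|x-y|^{-n-s}=|x-y|^{-\alpha}\cdot|x-y|^{-(n+s-\alpha)}$ is forced to take $\alpha=n/p+s$ to reproduce $[f]_{W^{s,p}}$, which leaves $(n+s-\alpha)p'=n$, so the second factor $\iint(|f|^{\gamma-1}(x)+|f|^{\gamma-1}(y))^{p'}|x-y|^{-n}\,dx\,dy$ diverges; any choice of $\alpha$ making the second factor integrable replaces $[f]_{W^{s,p}}$ by a strictly higher-order seminorm. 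Thus the inequality $\|f\|_{L^{p^*}}^\gamma\le C\|f\|_{L^{p^*}}^{\gamma-1}[f]_{W^{s,p}}$ is not obtained by this method. A correct treatment of general $p$ either works directly (as in DPV12's cube decomposition), or passes through the Riesz potential, or uses a genuine Maz'ya-type truncation scheme rather than a single H\"older application to $[|f|^\gamma]_{W^{s,1}}$.
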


\begin{lemma}\label{localfracsobolev}  {\rm(cf. \cite[Theorem 6.7]{DPV12})}
Let $s\in(0,1)$ and $p\in[1,+\infty)$ satisfy $sp<d$. Then for any $f\in W^{s,p} (B_r)$ with $B_r\subset \R^d$,
there exists a positive constant $C=C(d,p,s)$ such that
\begin{equation*}
  \|f\|_{L^{\frac{np}{n-sp}}(B_r)}^p\leq C\left(r^{-sp}\int_{B_r}|f(x)|^p\operatorname{d}\! x+\int_{B_r}\int_{B_r}\frac{|f(x)-f(y)|^p}{|x-y|^{d+sp}}\operatorname{d}\! x \operatorname{d}\! y\right).
\end{equation*}
\end{lemma}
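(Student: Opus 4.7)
The plan is to reduce to the unit ball case by a dilation argument. First I would invoke \cite[Theorem 6.7]{DPV12} on the canonical extension domain $B_1\subset\R^d$, which provides, for any $g\in W^{s,p}(B_1)$ with $sp<d$, the inequality
\[
\|g\|_{L^{dp/(d-sp)}(B_1)}^p \le C\Bigl(\|g\|_{L^p(B_1)}^p + [g]_{W^{s,p}(B_1)}^p\Bigr),
\]
with a constant $C=C(d,p,s)$ (no domain dependence, since the domain is the fixed unit ball). The remaining task is to transfer this bound to a ball of arbitrary radius $r$ while making the explicit $r$-dependence visible.

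Given $f\in W^{s,p}(B_r)$, I would introduce the dilated function $g(y):=f(ry)$ for $y\in B_1$, which lies in $W^{s,p}(B_1)$, and then apply the unit-ball embedding to $g$. A direct change of variables will produce
\[
\|g\|_{L^p(B_1)}^p = r^{-d}\|f\|_{L^p(B_r)}^p,\qquad
\|g\|_{L^{dp/(d-sp)}(B_1)}^p = r^{sp-d}\|f\|_{L^{dp/(d-sp)}(B_r)}^p,
\]
and
\[
[g]_{W^{s,p}(B_1)}^p = r^{sp-d}\,[f]_{W^{s,p}(B_r)}^p,
\]
where the second identity uses $dp/p^{*}=d-sp$ for $p^{*}=dp/(d-sp)$, and the third uses in addition the substitution $x_i=ry_i$, under which $|y_1-y_2|^{d+sp}=r^{-(d+sp)}|x_1-x_2|^{d+sp}$ and $dy_1\,dy_2=r^{-2d}\,dx_1\,dx_2$. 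Substituting these three identities into the $B_1$ inequality and then multiplying through by $r^{d-sp}$ yields precisely the claimed estimate, with the $r^{-sp}$ weight appearing in front of $\int_{B_r}|f|^p$.

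I do not anticipate any serious obstacle here, as this is essentially a routine scaling argument. The only delicate point is the bookkeeping of the powers of $r$, and in particular the observation that the Sobolev exponent $p^{*}=dp/(d-sp)$ is tuned so that the left-hand $L^{p^{*}}$ norm and the Gagliardo seminorm on the right scale with the same power $r^{sp-d}$. It is this scaling balance, combined with the mismatched scaling of the $L^p$ norm (which scales as $r^{-d}$ rather than $r^{sp-d}$), that forces the extra $r^{-sp}$ weight in the final inequality and makes the resulting estimate scale-covariant in a nontrivial way.
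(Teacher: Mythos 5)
Your scaling argument is exactly the derivation the paper indicates (it states only that the lemma ``can be derived by using scaling argument'' from \cite[Theorem~6.7]{DPV12}), and your bookkeeping of the powers of $r$ is correct: the $L^{p^*}$ norm and the Gagliardo seminorm both pick up $r^{sp-d}$ while the $L^p$ norm picks up $r^{-d}$, which after multiplying by $r^{d-sp}$ produces the $r^{-sp}$ weight in the stated inequality. The only cosmetic point is that the exponent in the statement should read $\frac{dp}{d-sp}$ rather than $\frac{np}{n-sp}$ (a leftover $n$ from Lemma~\ref{fracsobolev}), which you correctly interpreted.
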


The following energy estimate plays a pivotal role in dealing with the Marchaud fractional derivative when establishing the Caccioppoli-type inequality.
\begin{lemma}\label{extendt} {\rm(cf. \cite[Lemma 2.2]{ACV16})}
Let $f\in C([a,T])$. If we extend $f$ to all of $\R$ by defining $f(t)=0$ for $t<a$ and then reflecting evenly across $T$, then there exists a positive constant $C=C(s)$ such that
\begin{equation*}
  \int_{\R}\int_{\R}\frac{|f(t)-f(\tau)|^2}{|t-\tau|^{1+s}}\operatorname{d}\! \tau \operatorname{d}\! t\leq C\left(\int_{a}^T\int_{a}^t\frac{|f(t)-f(\tau)|^2}{|t-\tau|^{1+s}}\operatorname{d}\! \tau \operatorname{d}\! t+\int_{a}^T\frac{|f(t)|^2}{(T-t)^{s}}\operatorname{d}\! t\right).
\end{equation*}
\end{lemma}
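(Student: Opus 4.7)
The plan is to reduce the integral on $\R \times \R$ to the support $[a, 2T-a]$ of the extended $f$, handle the boundary pieces via explicit integration in the outside variable, and absorb a leftover weighted term through a fractional Hardy inequality on $[a, T]$. Since $f$ vanishes off $[a, 2T-a]$, the integration domain splits into three regions: both variables in $[a, 2T-a]$, exactly one outside, and both outside (where the contribution is zero).

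For the ``both inside'' region I would partition along $\tau = T$. On $[a,T]^2$ the integrand, together with its symmetric copy under $t \leftrightarrow \tau$, is twice the first term on the right-hand side. On $[T, 2T-a]^2$, the substitution $t \mapsto 2T - t$, $\tau \mapsto 2T - \tau$ combined with the evenness $f(2T - \cdot) = f(\cdot)$ maps the integral back to the one on $[a, T]^2$. On the mixed block $[a,T] \times [T, 2T-a]$, setting $\tau' = 2T - \tau \in [a, T]$ turns the kernel into $(2T - t - \tau')^{-1-s}$, which by the elementary identity $2T - t - \tau' = (T - t) + (T - \tau') \geq |t - \tau'|$ is pointwise dominated by the Gagliardo kernel on $[a,T]^2$.

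For the ``one in, one out'' region, $f$ vanishes at the outside variable, so the integrand reduces to $|f(\tau)|^2 |t - \tau|^{-1-s}$ with $\tau \in [a, 2T-a]$. Integrating the outside variable explicitly gives
\[
\int_{-\infty}^a (\tau - t)^{-1-s} \operatorname{d}\!t + \int_{2T-a}^\infty (t - \tau)^{-1-s} \operatorname{d}\!t = \frac{1}{s}\bigl[(\tau - a)^{-s} + (2T - a - \tau)^{-s}\bigr].
\]
Folding $[T, 2T-a]$ back to $[a, T]$ by the even reflection turns the resulting expression into a constant multiple of $\int_a^T |f(\tau)|^2 [(\tau - a)^{-s} + (2T - a - \tau)^{-s}] \operatorname{d}\!\tau$. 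The $(2T - a - \tau)^{-s}$ piece is immediately controlled by $\int_a^T |f(\tau)|^2 (T - \tau)^{-s} \operatorname{d}\!\tau$ since $2T - a - \tau \geq T - \tau$.

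The main obstacle is the remaining term $\int_a^T |f(\tau)|^2 (\tau - a)^{-s} \operatorname{d}\!\tau$, whose weight is singular at the endpoint opposite to the $(T - \tau)^{-s}$ weight on the right-hand side. I would absorb it by the one-sided fractional Hardy inequality
\[
\int_a^T \frac{|f(\tau)|^2}{(\tau - a)^s} \operatorname{d}\!\tau \leq C \int_a^T \int_a^T \frac{|f(t) - f(\tau)|^2}{|t - \tau|^{1+s}} \operatorname{d}\!t \operatorname{d}\!\tau + C \int_a^T \frac{|f(\tau)|^2}{(T - \tau)^s} \operatorname{d}\!\tau,
\]
a standard Hardy--Poincar\'e estimate for $H^{s/2}([a, T])$. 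It can be obtained by treating $\tau \in [(a+T)/2, T]$ via the direct bound $(\tau - a)^{-s} \leq 2^s (T - \tau)^{-s}$, and for $\tau \in [a, (a+T)/2]$ by applying the pointwise inequality $|f(\tau)|^2 \leq 2|f(\tau) - f(t)|^2 + 2|f(t)|^2$ averaged over $t$ in a set $E_\tau \subset [(a+T)/2, T]$ with $|E_\tau| \gtrsim T - a$ and $|t - \tau| \gtrsim T - a$, then using Fubini to match each term to one of the two summands on the right. Combining all of the above yields the claim with a constant depending only on $s$.
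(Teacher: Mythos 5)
This lemma is quoted from \cite[Lemma~2.2]{ACV16} and the paper gives no proof, so there is no in-paper argument to compare against; I therefore assess your proposal on its own.

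Your overall decomposition is sound: the split of $\R^2$ into ``both inside $[a,2T-a]$,'' ``one in/one out,'' and ``both out'' is correct; the reflection $\tau\mapsto 2T-\tau$ together with the elementary bound $2T-t-\tau'=(T-t)+(T-\tau')\ge|t-\tau'|$ correctly dominates the mixed block by the Gagliardo kernel on $[a,T]^2$; and the explicit integration of the outside variable does reduce that region to $\frac{1}{s}\int_a^T|f(\tau)|^2\bigl[(\tau-a)^{-s}+(2T-a-\tau)^{-s}\bigr]\,\mathrm{d}\tau$, with the second weight absorbed by $(T-\tau)^{-s}$. The sticking point is exactly where you put it: the residual $\int_a^T|f(\tau)|^2(\tau-a)^{-s}\,\mathrm{d}\tau$, and here there is a genuine gap. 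The one-sided Hardy--Poincar\'e inequality you state is in fact true, but your sketch of it does not close. After averaging over a set $E_\tau\subset[(a+T)/2,T]$ with $|E_\tau|\gtrsim T-a$ and $|t-\tau|\gtrsim T-a$, the ``difference'' piece produces a quantity of the form $\frac{C}{T-a}\int_a^{(a+T)/2}(\tau-a)^{-s}\int_{E_\tau}|f(\tau)-f(t)|^2\,\mathrm{d}t\,\mathrm{d}\tau$; converting $|f(\tau)-f(t)|^2$ to the Gagliardo integrand only costs a bounded factor $(T-a)^{1+s}$ since $|t-\tau|\approx T-a$, so the factor $(\tau-a)^{-s}$, which is unbounded as $\tau\to a^+$, survives and cannot be matched to the seminorm by Fubini alone. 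Replacing $E_\tau$ by a $\tau$-dependent set such as $[a+2(\tau-a),a+3(\tau-a)]$ removes that weight from the difference piece but, upon Fubini, returns the second piece to $\int_a^T|f(t)|^2(t-a)^{-s}\,\mathrm{d}t$ itself, so the estimate becomes circular.

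The inequality $\int_a^T|f|^2(\tau-a)^{-s}\,\mathrm{d}\tau\le C\,[f]^2_{H^{s/2}([a,T])}+C\int_a^T|f|^2(T-\tau)^{-s}\,\mathrm{d}\tau$ does hold for $s\in(0,1)$, but its proof needs a mechanism that propagates information across scales rather than a single fixed-scale averaging. One clean route: with $[a,T]=[0,1]$, set $I_k=[2^{-k-1},2^{-k}]$, $m_k=\fint_{I_k}f$, use the local Poincar\'e $\int_{I_k}|f-m_k|^2\lesssim 2^{-ks}[f]^2_{H^{s/2}(I_k)}$ and the telescoping bound $|m_k-m_{k-1}|^2\lesssim 2^{k(1-s)}\int_{I_{k-1}}\int_{I_k}\frac{|f(t)-f(\tau)|^2}{|t-\tau|^{1+s}}$, then control $\sum_k 2^{-k(1-s)}|m_k|^2$ by a weighted Cauchy--Schwarz with geometric weights $2^{-j\alpha}$, $0<\alpha<1-s$; the base term $|m_0|^2$ is absorbed by $\int|f|^2(1-\tau)^{-s}$ and the seminorm pieces sum to $[f]^2$. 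So the plan is salvageable, but as written the step you mark as ``standard'' is not established by the argument you give, and that is the load-bearing step of the whole proof.
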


We end this subsection by presenting two lemmas concerning the geometric convergence of sequences of numbers, along with an iterative lemma, which are essential in
carrying out the De Giorgi theory in the context of space-time nonlocal settings.
\begin{lemma}\label{iterate1} {\rm(cf. \cite[Chapter1, Lemma 4.1]{Dib93})}
Let $\{A_i\}$ be a sequence of positive numbers such that
$$A_{i+1}\leq Cb^iA_i^{1+\beta},$$
where $C,\,b>1$ and $\beta>0$ are given numbers. If $A_0\leq C^{-\frac{1}{\beta}}b^{-\frac{1}{\beta^2}}$, then $\{A_i\}$ converges to zero as $i\rightarrow\infty$.
\end{lemma}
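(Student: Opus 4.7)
The plan is to prove by induction that $A_i \leq A_0 \lambda^i$ for every $i \geq 0$, where $\lambda \in (0,1)$ is to be chosen so as to be compatible with both the recurrence and the smallness condition on $A_0$. Substituting this ansatz into $A_{i+1}\leq C b^{i} A_i^{1+\beta}$ yields
\[
A_{i+1} \leq C b^{i} \left(A_0 \lambda^{i}\right)^{1+\beta} = C A_0^{1+\beta}\, (b\lambda^{\beta})^{i}\, \lambda^{i},
\]
so closing the induction amounts to requiring $C A_0^{\beta} (b\lambda^{\beta})^{i} \leq \lambda$ for every $i\geq 0$.

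The natural way to eliminate the $i$-dependence in the preceding inequality is to pick $\lambda$ so that $b\lambda^{\beta} = 1$, namely $\lambda = b^{-1/\beta}$, which lies in $(0,1)$ because $b>1$. With this choice the requirement collapses to $C A_0^{\beta} \leq b^{-1/\beta}$, equivalently $A_0 \leq C^{-1/\beta} b^{-1/\beta^{2}}$, which is exactly the hypothesis of the lemma. The base case $i=0$ is trivial, and the induction step closes under this choice of $\lambda$, giving $A_i \leq A_0\, b^{-i/\beta}$ for all $i$. Sending $i\to\infty$ and using $b>1$ yields $A_i \to 0$.

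The only step that genuinely requires thought is guessing the correct geometric rate $\lambda = b^{-1/\beta}$; once this is in hand the argument is a short one-line induction. Accordingly I do not anticipate any conceptual obstacle, only careful bookkeeping of exponents in $C$, $b$, and $\beta$, in particular the distinction between the per-step decay rate $b^{-1/\beta}$ and the smallness threshold $b^{-1/\beta^{2}}$ imposed on $A_0$ (which comes from needing $C A_0^{\beta}$ to be bounded by $\lambda = b^{-1/\beta}$, producing a second factor of $1/\beta$ in the exponent).
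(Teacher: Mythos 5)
Your proof is correct and is essentially the standard argument for this lemma (the paper cites DiBenedetto's Lemma 4.1 in Chapter~1 of \emph{Degenerate Parabolic Equations} rather than re-proving it, and that reference uses precisely this induction showing $A_i \leq A_0\, b^{-i/\beta}$). The bookkeeping — $\lambda = b^{-1/\beta}$ to kill the $i$-dependence, and then the threshold $A_0 \leq C^{-1/\beta} b^{-1/\beta^2}$ coming from $C A_0^{\beta} \leq \lambda$ — is all handled correctly.
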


\begin{lemma}\label{iterate2} {\rm(cf. \cite[Chapter1, Lemma 4.2]{Dib93})}
Let $\{A_i\}$ and $\{B_i\}$ be sequences of positive numbers, satisfying the recursive inequalities
\begin{equation*}
  \left\{\begin{array}{r@{\ \ }c@{\ \ }ll}
  A_{i+1}&\leq& Cb^i(A_i^{1+\beta}+A_i^{\beta}B_i^{1+\gamma}),\\
  B_{i+1}&\leq& Cb^i(A_i+B_i^{1+\gamma}),
  \end{array}\right.
\end{equation*}
where $C,\,b>1$ and $\beta,\,\gamma>0$ are given numbers.  If
$$A_0+B_0^{1+\gamma}\leq (2C)^{-\frac{1+\gamma}{\sigma}}b^{-\frac{1+\gamma}{\sigma^2}}$$
with $\sigma=\min\{\beta,\gamma\}$, then
$\{A_i\}$ and $\{B_i\}$ tend to zero as $i\rightarrow\infty$.
\end{lemma}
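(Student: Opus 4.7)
The plan is to prove a joint geometric-decay ansatz: with $q := b^{-(1+\gamma)/\sigma} \in (0,1)$ and $K := A_0 + B_0^{1+\gamma}$, I claim
$$A_i \leq K q^i \quad \text{and} \quad B_i^{1+\gamma} \leq K q^i \qquad \text{for every } i \geq 0.$$
Since $q<1$, this immediately forces $A_i, B_i \to 0$. The base case is immediate from the choice of $K$, because $A_0, B_0^{1+\gamma} \leq A_0 + B_0^{1+\gamma} = K$.

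For the inductive step I would handle the two recursions separately. In the $A$-recursion, both summands are bounded by $K^{1+\beta} q^{i(1+\beta)}$, so
$$A_{i+1} \leq 2 C K^{1+\beta} (bq^\beta)^i\, q^i.$$
Because $\sigma \leq \beta$ and $q = b^{-(1+\gamma)/\sigma}$, a direct exponent check gives $bq^\beta \leq 1$, so the step closes provided $2CK^\beta \leq q$. In the $B$-recursion the sum is bounded by $2Kq^i$, yielding $B_{i+1} \leq 2CK(bq)^i$; raising to the $(1+\gamma)$th power and regrouping powers of $q$ and $b$ leaves a residual geometric factor $b^{1+\gamma} q^\gamma$, which is at most $1$ by $\sigma \leq \gamma$, and the step closes provided $(2C)^{1+\gamma} K^\gamma \leq q$.

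It remains to verify both residual inequalities simultaneously. The smallness hypothesis already forces $K < 1$ (since $C,b>1$), so $K^\beta, K^\gamma \leq K^\sigma$, and the binding constraint is $(2C)^{1+\gamma} K^\sigma \leq q$. Substituting $q = b^{-(1+\gamma)/\sigma}$ and taking $\sigma$th roots gives exactly
$$K \leq (2C)^{-(1+\gamma)/\sigma} b^{-(1+\gamma)/\sigma^2},$$
which is precisely the hypothesis on $A_0 + B_0^{1+\gamma}$. Thus the numerical constants in the statement are reverse-engineered so that the induction closes.

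The main obstacle is a design one: identifying the correct decay rate $q$. It is pinned down by the twin constraints $q \leq b^{-1/\beta}$ and $q \leq b^{-(1+\gamma)/\gamma}$ coming from the $A$- and $B$-iterations respectively, which together force $q \leq b^{-(1+\gamma)/\sigma}$; saturating this bound is what makes the exponent $\sigma = \min\{\beta,\gamma\}$ appear naturally in the smallness threshold. Once this ansatz is in place, the induction reduces to bookkeeping of exponents.
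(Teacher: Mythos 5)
Your proof is correct: the induction with the joint ansatz $A_i\leq Kq^i$, $B_i^{1+\gamma}\leq Kq^i$ and $q=b^{-(1+\gamma)/\sigma}$ closes exactly under the stated smallness condition, and all the exponent checks ($bq^{\beta}\leq 1$, $b^{1+\gamma}q^{\gamma}\leq 1$, $(2C)^{1+\gamma}K^{\sigma}\leq q$) go through. The paper does not prove this lemma but cites DiBenedetto, whose argument is the same geometric-decay induction, so your approach is essentially the standard one.
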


\begin{lemma}\label{iterate3} {\rm(cf. \cite[Lemma 6.1]{Giu})}
Let $f$ be a bounded nonnegative function defined on $[r,R]$.
Assume that for $ r\leq \rho_1<\rho_2\leq R$ we have
\begin{equation*}
  f(\rho_1)\leq \vartheta f(\rho_2)+\frac{c_1}{(\rho_2-\rho_1)^{\beta}}+c_2,
\end{equation*}
where $c_1,\,c_2\geq 0$, $\beta>0$, and $0\leq\vartheta<1$. Then there exists a positive constant $C=C(\beta,\vartheta)$
such that
\begin{equation*}
  f(r)\leq C\left[\frac{c_1}{(R-r)^{\beta}}+c_2\right].
\end{equation*}
\end{lemma}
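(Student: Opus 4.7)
The plan is to iterate the assumed inequality on a geometric sequence of radii in $[r,R]$ and exploit the contraction factor $\vartheta<1$ to absorb the dependence on $f$ at larger scales. Fix a parameter $\tau\in(0,1)$ to be chosen, set $\rho_0=r$, and define
$$\rho_{i+1}=\rho_i+(1-\tau)\tau^i(R-r),$$
so that $\rho_i=r+(1-\tau^i)(R-r)$ is monotone increasing with $\rho_i\to R$, and $\rho_{i+1}-\rho_i=(1-\tau)\tau^i(R-r)$. Applying the hypothesis with $\rho_1=\rho_i$ and $\rho_2=\rho_{i+1}$ yields
$$f(\rho_i)\leq \vartheta f(\rho_{i+1})+\frac{c_1}{(1-\tau)^\beta(R-r)^\beta}\,\tau^{-i\beta}+c_2.$$

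Iterating this bound $N$ times gives
$$f(r)=f(\rho_0)\leq \vartheta^N f(\rho_N)+\sum_{i=0}^{N-1}\vartheta^i\left[\frac{c_1\,\tau^{-i\beta}}{(1-\tau)^\beta(R-r)^\beta}+c_2\right].$$
Now I choose $\tau\in(0,1)$ so that $\vartheta\tau^{-\beta}<1$; for example $\tau=\bigl((1+\vartheta)/2\bigr)^{1/\beta}$ if $\vartheta>0$, and any $\tau\in(0,1)$ if $\vartheta=0$. With this choice, both $\sum_{i\geq 0}(\vartheta\tau^{-\beta})^i$ and $\sum_{i\geq 0}\vartheta^i$ are convergent geometric series whose sums depend only on $\beta$ and $\vartheta$. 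Since $f$ is bounded on $[r,R]$ and $\vartheta^N\to 0$, the remainder term $\vartheta^N f(\rho_N)$ vanishes in the limit, and sending $N\to\infty$ yields the desired inequality with $C=C(\beta,\vartheta)$.

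There is no real obstacle here; this is a standard Giusti-style hole-filling lemma. The only delicate point is the coupled choice of $\tau$: it must be large enough that $\vartheta\tau^{-\beta}<1$ (so the $c_1$-series is summable) yet strictly less than $1$ (so the radii actually stay in $[r,R]$ and the contraction $\vartheta^N$ kills the leftover term). The boundedness hypothesis on $f$ is used exactly at the step of discarding $\vartheta^N f(\rho_N)$ in the $N\to\infty$ limit; without it, one would only obtain an estimate of the form $f(r)\leq\vartheta^N f(\rho_N)+\text{(finite sum)}$ and could not conclude.
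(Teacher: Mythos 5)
Your proof is correct and is precisely the standard dyadic-interpolation argument from the cited reference (Giusti, Lemma 6.1); the paper itself states this lemma without proof, so there is nothing to compare beyond noting that your choice $\tau=((1+\vartheta)/2)^{1/\beta}$ validly enforces $\vartheta\tau^{-\beta}<1$ while keeping $\tau\in(0,1)$, and your identification of where boundedness of $f$ enters is accurate.
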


\subsection{Caccioppoli inequality}

In this subsection, we focus on establishing a Caccioppoli-type inequality with tails for the space-time nonlocal parabolic equation \eqref{eq1}.
When there is no confusion, we usually omit the vertex $(x_0,t_0)$ from the parabolic cylinder $Q_{R}(x_0,t_0)$ for convenient.
In what follows, $C$ denotes a constant whose value may be
different from line to line, and only the relevant dependence is specified in parentheses.

\begin{lemma}
\label{lemma:Cacc}
Let $u$ be a local weak subsolution to \eqref{eq1} in $I\times\Omega$,
then for any $R>0$ and $(t_0,x_0)\in I\times\Omega$ with $I^\ominus_{2R}(t_0)\times B_{2R}(x_0)\Subset I\times\Omega$, there exists a positive constant $C$ depending only on $d,\,s,\,\Lambda$ such that
\begin{eqnarray}\label{eq:Cacc}
 &&  \int_{I_{r}^{\ominus}(t_0)}\int_{B_r(x_0)} \int_{B_r(x_0)}\frac{|w_{+}(t,x)-w_{+}(t,y)|^2}{|x-y|^{d+2s}}\operatorname{d}\! x \operatorname{d}\! y\operatorname{d}\!t \nonumber\\
 &&+\int_{B_r(x_0)}\int_{I_{r}^{\ominus}(t_0)} \int_{I_{r}^{\ominus}(t_0)}\frac{|w_{+}(t,x)-w_{+}(\tau,x)|^2}{|t-\tau|^{1+s}}\operatorname{d}\! \tau \operatorname{d}\! t\operatorname{d}\!x \nonumber \\
 &&+ r^{-d-2s} \int_{I_{r}^{\ominus}(t_0)} \int_{B_r(x_0)} \int_{B_r(x_0)} w_{+}(t,x) w_{-}(t,y) \operatorname{d}\! x \operatorname{d}\! y \operatorname{d}\! t\nonumber\\
 &\leq&  C \left[\left(\frac{r+\rho_1}{\rho_1}\right)^2\rho_1^{-2s}\vee\left(\frac{r+\rho_2}{\rho_2}\right)^2\rho_2^{-2s}\right]\int_{I_{r + \rho_1}^{\ominus}(t_0)} \int_{B_{r+\rho_2}(x_0)} w^2_{+}(t,x) \operatorname{d}\! x \operatorname{d}\! t \nonumber\\
 &&+ C\left(\frac{r+\rho_1}{\rho_1}\right)^2 \rho_1^{-2s} \int_{B_{r+\rho_2}(x_0)} \left(\int_{I_{r+\rho_1}^{\ominus}(t_0)} w_{+}(t,x) \operatorname{d}\!t \right) \operatorname{tail}(w_{+}(x);t_0,r+\rho_1)\operatorname{d}\!x\nonumber\\
&&+ C\left(\frac{r+\rho_2}{\rho_2}\right)^d \rho_2^{-2s} \int_{I_{r+\rho_1}^{\ominus}(t_0)} \left(\int_{B_{r+\rho_2}(x_0)} w_{+}(t,x) \operatorname{d}\!x \right) \operatorname{Tail}(w_{+}(t);x_0,r+\rho_2)\operatorname{d}\!t\nonumber\\
&&+C\int_{I_{r+\rho_1}^{\ominus}(t_0)} \int_{B_{r+\rho_2}(x_0)}|f(t,x)|w_{+}(t,x)\operatorname{d}\!x \operatorname{d}\! t
\end{eqnarray}
for any $0 < \rho_1\leq r \leq r+\rho_1 \leq R$ and $0 < \rho_2 \leq r \leq r+\rho_2 \leq R$ such that $I_{r+\rho_1}^{\ominus}(t_0) \times B_{r+\rho_2}(x_0) \subset Q_{2R}(t_0,x_0)$, where $w = u-l$ with a level $l \in \R$, the lower truncation $w_+:=\max\{u-l,0\}$, the upper truncation $w_-:=-\min\{u-l,0\}$, and $a\vee b:=\max\{a,b\}$.
\end{lemma}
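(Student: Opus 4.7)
The plan is to test the weak subsolution inequality with the nonnegative function
$\varphi(t,x) = w_+(t,x)\eta_1^2(t)\eta_2^2(x)$, where $\eta_1 \in C^1(\R)$ is a
temporal cutoff with $\eta_1 \equiv 1$ on $I_r^\ominus(t_0)$ and
$\mathrm{supp}\,\eta_1 \subset (t_0-(r+\rho_1)^2,\,t_0]$, so that the required
left-endpoint vanishing of the test function is automatic, while $\eta_2 \in C_c^\infty(\R^d)$
is a spatial cutoff with $\eta_2 \equiv 1$ on $B_r(x_0)$ and
$\mathrm{supp}\,\eta_2 \subset B_{r+\rho_2}(x_0)$, both satisfying the natural
derivative bounds commensurate with $\rho_1$ and $\rho_2$. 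A routine Steklov-type
regularization in both variables makes $\varphi$ admissible; I suppress it.

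For the spatial double integral I would follow the nonlocal elliptic template of
\cite{DKP16}. Splitting $\R^d\times\R^d$ into the near-near piece
$B_{r+\rho_2}(x_0)\times B_{r+\rho_2}(x_0)$ and the two symmetric near-far pieces, the
standard Di Castro--Kuusi--Palatucci pointwise algebraic inequality applied to
$(u(t,x)-u(t,y))\bigl(w_+(t,x)\eta_2^2(x)-w_+(t,y)\eta_2^2(y)\bigr)$ produces, on the
near-near piece against the kernel $|x-y|^{-d-2s}$, both the first fractional Sobolev
seminorm of $w_+$ and the cross term $\int w_+(t,x) w_-(t,y)$ with the prefactor
$r^{-d-2s}$ coming from the ellipticity bound $K \geq \Lambda^{-1}|x-y|^{-d-2s}$ with
$|x-y|\leq 2r$. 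The unwanted quadratic error is controlled by
$(\eta_2(x)-\eta_2(y))^2 \lesssim \rho_2^{-2}|x-y|^2$, yielding the stated
$\rho_2^{-2s}(r+\rho_2)^2$-weighted $L^2$ term. On the near-far pieces, $\eta_2(y)=0$
forces the pointwise integrand to reduce to a product that, via the standard
comparison of $|x-y|$ with $|x_0-y|$, integrates into
$\operatorname{Tail}(w_+(t);x_0,r+\rho_2)$.

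The temporal terms are handled analogously, with the caveat that the Marchaud kernel
is one-sided. I split the Marchaud double integral according to
$\tau\in(t_0-(r+\rho_1)^2,\,t)$ and $\tau\leq t_0-(r+\rho_1)^2$; the same algebraic
identity (now with $\phi=\eta_1$) applied on the near piece produces the one-sided
Gagliardo seminorm on $I_{r+\rho_1}^\ominus(t_0)\times I_{r+\rho_1}^\ominus(t_0)$
together with an $L^2$ error weighted by $\rho_1^{-2s}(r+\rho_1)^2$, while the far
piece produces $\operatorname{tail}(w_+(x);t_0,r+\rho_1)$ after separating the tail
into its near and far portions relative to $t_0-(r+\rho_1)^2$ so that the weights
$|t-\tau|^{-(1+s)}$ and $(t_0-\tau)^{-(1+s)}$ can be reconciled. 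The remaining two
weak-formulation contributions $-\iint u\,\partial_t^s\varphi$ and
$\iint u\varphi/(t_2-t)^s$ are precisely what is needed to activate Lemma
\ref{extendt}: extending $w_+\eta_1$ by zero for $t\leq t_0-(r+\rho_1)^2$ and
reflecting evenly across $t_2=t_0$, that lemma upgrades the one-sided Marchaud
seminorm into the symmetric two-sided Gagliardo seminorm on
$I_r^\ominus(t_0)\times I_r^\ominus(t_0)$ claimed on the left-hand side of
\eqref{eq:Cacc}.

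Putting the spatial and temporal contributions together, keeping $f\varphi$ on the
right-hand side, and using Young's inequality to absorb small portions of the
Gagliardo seminorms against the $L^2$ error terms yields \eqref{eq:Cacc}. The main
obstacle is the delicate bookkeeping among the three temporal contributions in the
weak formulation: unlike the spatial fractional Laplacian, the one-sided Marchaud
derivative does not by itself generate a symmetric Gagliardo seminorm, and
extracting such a seminorm together with the correct time tail without spurious
singular boundary contributions from the $u\varphi/(t_2-t)^s$ term requires the
combined zero-extension and even-reflection argument of Lemma \ref{extendt}, which is
the key technical ingredient linking the Marchaud formalism to the De Giorgi machinery.
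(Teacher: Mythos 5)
Your spatial analysis matches the paper exactly: same test function $\varphi = \eta_1^2\eta_2^2 w_+$, same near--near/near--far decomposition of $\R^d\times\R^d$, the same Di Castro--Kuusi--Palatucci algebraic inequality giving the Gagliardo seminorm plus the $w_+w_-$ cross term plus the $L^2$ error from $(\eta_2(x)-\eta_2(y))^2$, and the same comparison of $|x-y|$ with $|x_0-y|$ to produce $\operatorname{Tail}$. No issues there.

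The gap is in the treatment of the three temporal contributions of the weak formulation, and it is not merely a bookkeeping detail. Your plan is to apply the DKP-style pointwise identity directly to the bilinear Marchaud form
$C_s\int\!\!\int\!\!\int_{-\infty}^t (u(t)-u(\tau))(\varphi(t)-\varphi(\tau))/(t-\tau)^{1+s}$
(split near/far in $\tau$) to extract the one-sided Gagliardo seminorm of $w_+\eta_1$ plus the $L^2$ error and the time tail, and then to claim that the remaining two contributions $-\iint u\,\partial_t^s\varphi$ and $\iint u\varphi/(t_2-t)^s$ ``are precisely what is needed to activate Lemma~\ref{extendt}.'' But Lemma~\ref{extendt} needs exactly two inputs: the one-sided seminorm (which in your plan already comes from the bilinear form) and the singular boundary term $\int |w_+\eta_1|^2/(t_0-t)^s$ (which comes from $\iint u\varphi/(t_2-t)^s$). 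The term $-\iint u\,\partial_t^s\varphi$ is neither of these: it has no definite sign, cannot be dropped, and does not on its own have the structure of a Gagliardo seminorm or a boundary integral, so your sentence leaves it genuinely unaccounted for. In the paper this term is the engine of the whole temporal argument: after substituting $u = w_+ - w_- + l$ and dropping the nonnegative piece $-w_+\partial_t^s w_-$, the paper writes $\partial_t^s\varphi = \phi^2\eta^2\partial_t^s w_+ + (\text{commutator with }\eta^2)$, uses the pointwise identity $2w_+\partial_t^s w_+ = \partial_t^s w_+^2 + C_s\int(w_+(t)-w_+(\tau))^2/(t-\tau)^{1+s}\,d\tau$, integrates $\int\eta^2\,\partial_t^s w_+^2$ by parts once more with the Marchaud formula (producing the boundary term $\int w_+^2\eta^2/(t_0-t)^s$ and the nonnegative $w_+^2\partial_t^s\eta^2$ that is then discarded using the monotonicity of $\eta$), and only after combining all of this with the bilinear form does the complete square $\frac{C_s}{2}(w_+\eta(t)-w_+\eta(\tau))^2$ of the paper's term~$II$ emerge. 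Your DKP decomposition of the bilinear form is \emph{not} a substitute for this computation, since the three temporal contributions are algebraically constrained (their sum is $\int\varphi\,\partial_t^s u$) and you cannot both manufacture a full-strength Gagliardo seminorm out of the bilinear form alone and still have the $-\iint u\,\partial_t^s\varphi$ piece available to be discarded.

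A related omission: you never mention the reduction from $u$ to $w_+$ inside the three temporal terms. The paper justifies it by the pointwise sign fact $-w_+\partial_t^s w_- \ge 0$ (so the $w_-$ part of $\int\varphi\partial_t^s u$ is nonnegative and can be dropped for a subsolution), and the constant $l$ drops because $\partial_t^s l = 0$. Without stating this, the DKP positivity-set cases you invoke for the bilinear form are not enough, because the other two contributions still carry the full $u$.

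In short, your spatial half and your identification of Lemma~\ref{extendt} as the key tool are correct, but the temporal half is not a proof: you need to carry out the Marchaud integration by parts on $-\iint u\,\partial_t^s\varphi$ and the reduction $u\to w_+$ before the ingredients of Lemma~\ref{extendt} appear, and the paper's $III_1/III_2$ near/far split of the cross term $(\eta^2(t)-\eta^2(\tau))w_+(t)w_+(\tau)$ (not the bilinear form itself) is what produces the $L^2$ error and $\operatorname{tail}$ contributions you want.
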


\begin{remark}
The Caccioppoli-type inequality \eqref{eq:Cacc} still holds for $w_-$ when $u$ is a local weak supersolution to \eqref{eq1} in $I\times\Omega$. The set of functions satisfying Caccioppoli inequality is typically called a De Giorgi class.
\end{remark}

\begin{proof}
The proof goes by testing the weak formula with the test function $\varphi(t,x) = \eta^2(t)\phi^2(x)w_+(t,x)$, where the time truncation function  $\eta \in C^{\infty}((-\infty,t_0);[0,1])$ with $\eta \equiv 1$ in $I_r^\ominus$, $\eta \equiv 0$ in $(-\infty,t_0-(r+\frac{\rho_1}{2})^2]$, and $|\eta'| \leq C \left((r+\rho_1)^2-r^2\right)^{-1}$, and
the space cut-off function $\phi \in C_0^{\infty}(B_{r+\frac{\rho_2}{2}})$ with $\phi \equiv 1$ in $B_{r}$, $0 \leq \phi\leq 1$, and $|\nabla \phi| \leq C \rho_2^{-1}$. As pointed out in \cite[Lemma 2.3]{A19}, similar to the case of $s=1$, the Marchaud fractional derivative retains the property of commuting with the Steklov averaging operator, which is the main advantage to considering this type of fractional time derivative.
Thus, although the selected test function relies on $u$, which lacks the required regularity, it can be rigorously justified by using Steklov averages technique as in \cite[Section 7]{KW22b} and \cite[Section 2]{A19}.

Note the fact that
$u(t,x)=w_+(t,x)-w_-(t,x)+l$ and
$$-w_+(t,x)\partial^s_tw_-(t,x)\geq 0,$$
we have the following weak subsolution formula
\begin{eqnarray}\label{weaksol1}
		&&\int_{I_{r+\rho_1}^{\ominus}} \int_{B_{r+\rho_2}}f\varphi\operatorname{d}\!x \operatorname{d}\! t\nonumber \\ &\geq&-\int_{I_{r+\rho_1}^{\ominus}} \int_{B_{r+\rho_2}} w_+(t,x)\partial^s_t\varphi(t,x) \operatorname{d}\! x \operatorname{d}\!t +\int_{I_{r+\rho_1}^{\ominus} }\int_{B_{r+\rho_2}} \frac{w_+(t,x)\varphi(t,x)}{(t_0-t)^s} \operatorname{d}\!x\operatorname{d}\!t\nonumber\\
&&+C_s
\int_{B_{r+\rho_2}} \int_{I_{r+\rho_1}^{\ominus}}\int_{-\infty}^{t} \frac{(w_+(t,x)-w_+(\tau,x))(\varphi(t,x)-\varphi(\tau,x))}{(t-\tau)^{1+s}}\operatorname{d}\!\tau
\operatorname{d}\!t\operatorname{d}\!x\nonumber\\
&& + \int_{I_{r+\rho_1}^{\ominus}} \int_{\mathbb{R}^d} \int_{\mathbb{R}^d} (u(t,x)-u(t,y))(\varphi(t,x)-\varphi(t,y)) K(t,x,y) \operatorname{d}\!y \operatorname{d}\!x \operatorname{d}\! t.
\end{eqnarray}
We directly calculate the first term on the right side of \eqref{weaksol1}, and apply the integration by parts formula for the Marchaud derivative to derive
\begin{eqnarray*}
   &&-\int_{I_{r+\rho_1}^{\ominus}} \int_{B_{r+\rho_2}} w_+(t,x)\partial^s_t\varphi(t,x) \operatorname{d}\! x \operatorname{d}\!t\\
   &=& -\int_{I_{r+\rho_1}^{\ominus}} \int_{B_{r+\rho_2}} \phi^2(x)\eta^2(t)\left(w_+(t,x)\partial^s_tw_+(t,x)\right) \operatorname{d}\! x \operatorname{d}\!t \\
  &&- C_s\int_{I_{r+\rho_1}^{\ominus}} \int_{B_{r+\rho_2}} \int_{-\infty}^t \frac{(\eta^2(t)-\eta^2(\tau))w_+(\tau,x) w_+(t,x)\phi^2(x)}{(t-\tau)^{1+s}}
  \operatorname{d}\! \tau  \operatorname{d}\! x \operatorname{d}\!t\\
   &=&-\frac{1}{2}\int_{B_{r+\rho_2}}   \phi^2(x)\left(\int_{I_{r+\rho_1}^{\ominus}} \eta^2(t)\partial^s_tw^2_+(t,x) \operatorname{d}\!t\right)\operatorname{d}\! x \\
   &&-\frac{C_s}{2}\int_{I_{r+\rho_1}^{\ominus}} \int_{B_{r+\rho_2}} \int_{-\infty}^t\frac{(w_+(t,x)-w_+(\tau,x))^2 \phi^2(x)\eta^2(t)}{(t-\tau)^{1+s}}
  \operatorname{d}\! \tau  \operatorname{d}\! x \operatorname{d}\!t\\
  &&- C_s\int_{I_{r+\rho_1}^{\ominus}} \int_{B_{r+\rho_2}} \int_{-\infty}^t\frac{(\eta^2(t)-\eta^2(\tau))w_+(\tau,x) w_+(t,x)\phi^2(x)}{(t-\tau)^{1+s}}
  \operatorname{d}\! \tau  \operatorname{d}\! x \operatorname{d}\!t\\
   &=&\frac{1}{2}\int_{B_{r+\rho_2}}   \phi^2(x)\left[\int_{I_{r+\rho_1}^{\ominus}} w^2_+(t,x)\partial^s_t\eta^2(t) \operatorname{d}\!t-\int_{I_{r+\rho_1}^{\ominus}} \frac{w^2_+(t,x)\eta^2(t)}{(t_0-t)^s} \operatorname{d}\!t\right]\operatorname{d}\! x\\
   &&-\frac{C_s}{2}\int_{I_{r+\rho_1}^{\ominus}} \int_{B_{r+\rho_2}} \int_{-\infty}^t\frac{(w^2_+(t,x)-w^2_+(\tau,x))(\eta^2(t)-\eta^2(\tau))\phi^2(x)}{(t-\tau)^{1+s}}
  \operatorname{d}\! \tau  \operatorname{d}\! x \operatorname{d}\!t\\
   &&-\frac{C_s}{2} \int_{B_{r+\rho_2}}\phi^2(x) \int_{I_{r+\rho_1}^{\ominus}} \int_{-\infty}^t\frac{(w_+(t,x)-w_+(\tau,x))^2\eta^2(t)}{(t-\tau)^{1+s}}
  \operatorname{d}\! \tau  \operatorname{d}\!t\operatorname{d}\! x\\
  &&- C_s\int_{B_{r+\rho_2}}\phi^2(x) \int_{I_{r+\rho_1}^{\ominus}} \int_{-\infty}^t\frac{(\eta^2(t)-\eta^2(\tau))w_+(\tau,x) w_+(t,x)}{(t-\tau)^{1+s}}
  \operatorname{d}\! \tau  \operatorname{d}\!t\operatorname{d}\! x
\end{eqnarray*}
Substituting the above equality into \eqref{weaksol1}and utilizing the nondecreasing and nonnegative properties of $\eta$, we
show that the weak  subsolution $u$ satisfies
\begin{eqnarray}\label{weaksol2}
&&\int_{I_{r+\rho_1}^{\ominus}(t_0)} \int_{B_{r+\rho_2}(x_0)}|f(t,x)|w_{+}(t,x)\operatorname{d}\!x \operatorname{d}\! t\nonumber\\
		 &\geq&\int_{I_{r+\rho_1}^{\ominus}} \int_{B_{r+\rho_2}}f\varphi\operatorname{d}\!x \operatorname{d}\! t\nonumber \\ &\geq&\frac{1}{2}\int_{I_{r+\rho_1}^{\ominus} }\int_{B_{r+\rho_2}} \frac{w^2_+(t,x)\eta^2(t)\phi^2(x)}{(t_0-t)^s} \operatorname{d}\!x\operatorname{d}\!t\nonumber\\
&&+\frac{C_s}{2} \int_{B_{r+\rho_2}}\phi^2(x) \int_{I_{r+\rho_1}^{\ominus}} \int_{-\infty}^t\frac{(w_+(t,x)\eta(t)-w_+(\tau,x)\eta(\tau))^2}{(t-\tau)^{1+s}}
  \operatorname{d}\! \tau  \operatorname{d}\!t\operatorname{d}\! x\nonumber\\
  &&- C_s\int_{B_{r+\rho_2}}\phi^2(x) \int_{I_{r+\rho_1}^{\ominus}} \int_{-\infty}^t\frac{(\eta^2(t)-\eta^2(\tau))w_+(\tau,x) w_+(t,x)}{(t-\tau)^{1+s}}
  \operatorname{d}\! \tau  \operatorname{d}\!t\operatorname{d}\! x\nonumber\\
&& + \int_{I_{r+\rho_1}^{\ominus}} \int_{\mathbb{R}^d} \int_{\mathbb{R}^d} (u(t,x)-u(t,y))(\varphi(t,x)-\varphi(t,y)) K(t,x,y) \operatorname{d}\!y \operatorname{d}\!x \operatorname{d}\! t\nonumber\\
&:=&I+II+III+IV.
\end{eqnarray}

Based on the definition of $\phi$, we first estimate $I$ and $II$ as follows
\begin{equation*}
  I\geq \frac{1}{2}\int_{B_{r}} \int_{I_{r+\rho_1}^{\ominus} } \frac{w^2_+(t,x)\eta^2(t)}{(t_0-t)^s} \operatorname{d}\!x\operatorname{d}\!t,
\end{equation*}
and
\begin{equation*}
  II\geq\frac{C_s}{2}\int_{B_r}\int_{I_{r+\rho_1}^{\ominus}} \int_{t_0-(r+\rho_1)^2}^t\frac{(w_+(t,x)\eta(t)-w_+(\tau,x)\eta(\tau))^2}{(t-\tau)^{1+s}}\operatorname{d}\! \tau \operatorname{d}\! t\operatorname{d}\!x.
\end{equation*}
By virtue of Lemma \ref{extendt}, we further
extend $w_+\eta$ evenly in $t$ across the entire $\R$ by
maintaining symmetry with respect to $t=t_0$, we deduce that
\begin{eqnarray*}
&&\int_{B_{r}} \int_{I_{r}^{\ominus}} \int_{I_{r}^{\ominus}}\frac{|w_{+}(t,x)-w_{+}(\tau,x)|^2}{|t-\tau|^{1+s}}\operatorname{d}\! \tau \operatorname{d}\! t\operatorname{d}\!x\nonumber\\
&\leq&\int_{B_{r}} \int_{\R} \int_{\R}\frac{\left(w_{+}(t,x)\eta(t)-w_{+}(\tau,x)\eta(\tau)\right)^2}{|t-\tau|^{1+s}}\operatorname{d}\! \tau \operatorname{d}\! t\operatorname{d}\!x\nonumber\\
&\leq& C(s)\left(\int_{B_r}\int_{I_{r+\rho_1}^{\ominus}} \int_{t_0-(r+\rho_1)^2}^t\frac{\left(w_{+}(t,x)\eta(t)-w_{+}(\tau,x)\eta(\tau)\right)^2}{(t-\tau)^{1+s}}\operatorname{d}\! \tau \operatorname{d}\! t\operatorname{d}\!x
  +\int_{B_{r}} \int_{I_{r+\rho_1}^{\ominus} }\frac{w^2_+(t,x)\eta^2(t)}{(t_0-t)^s} \operatorname{d}\!t \operatorname{d}\!x\right)\\
  &\leq&C(s)(I+II).
\end{eqnarray*}

Next, we estimate $III$ by decomposing the integral region into two parts
\begin{eqnarray*}
  -III &=& C_s\int_{B_{r+\rho_2}}\phi^2(x) \int_{I_{r+\rho_1}^{\ominus}} \int_{t_0-(r+\rho_1)^2}^t\frac{(\eta^2(t)-\eta^2(\tau))w_+(\tau,x) w_+(t,x)}{(t-\tau)^{1+s}}
  \operatorname{d}\! \tau  \operatorname{d}\!t\operatorname{d}\! x \\
   &&+ C_s\int_{B_{r+\rho_2}}\phi^2(x) \int_{I_{r+\frac{\rho_1}{2}}^{\ominus}} \int_{-\infty}^{t_0-(r+\rho_1)^2}\frac{\eta^2(t)w_+(\tau,x) w_+(t,x)}{(t-\tau)^{1+s}}
  \operatorname{d}\! \tau  \operatorname{d}\!t\operatorname{d}\! x \\
  &:=&III_1+III_2.
\end{eqnarray*}
With respect to the estimate of $III_1$, it follows from the choice of $\eta$ that
\begin{eqnarray*}
  III_1 &\leq& C\rho_1^{-2}\int_{B_{r+\rho_2}} \int_{I_{r+\rho_1}^{\ominus}} \int_{I_{r+\rho_1}^{\ominus}} \frac{w_+(\tau,x) w_+(t,x)}{|t-\tau|^{s}}
  \operatorname{d}\! \tau  \operatorname{d}\!t\operatorname{d}\! x \\
   &\leq&  C\rho_1^{-2}\int_{B_{r+\rho_2}} \int_{I_{r+\rho_1}^{\ominus}} w^2_+(t,x)\left( \int_{I_{r+\rho_1}^{\ominus}} \frac{1}{|t-\tau|^{s}}
  \operatorname{d}\! \tau\right)  \operatorname{d}\!t\operatorname{d}\! x \\
  &\leq&  C(s)(\frac{r+\rho_1}{\rho_1})^2\rho_1^{-2s}\int_{I_{r+\rho_1}^{\ominus}}\int_{B_{r+\rho_2}} w^2_+(t,x) \operatorname{d}\! x\operatorname{d}\!t\,.
\end{eqnarray*}
By performing a direct calculation for $III_2$, we obtain
\begin{eqnarray*}
  III_2 &\leq& C \int_{B_{r+\rho_2}} \int_{I_{r+\frac{\rho_1}{2}}^{\ominus}} w_+(t,x)\left( \int_{-\infty}^{t_0-(r+\rho_1)^2}\frac{w_+(\tau,x) }{(t-\tau)^{1+s}}
  \operatorname{d}\! \tau \right) \operatorname{d}\!t\operatorname{d}\! x\\
   &\leq&  C(s) \int_{B_{r+\rho_2}} \int_{I_{r+\frac{\rho_1}{2}}^{\ominus}} w_+(t,x)\left[(\frac{\rho_1}{r+\rho_1})^{-2(1+s)}\int_{-\infty}^{t_0-(r+\rho_1)^2}\frac{w_+(\tau,x) }{(t_0-\tau)^{1+s}}
  \operatorname{d}\! \tau \right] \operatorname{d}\!t\operatorname{d}\! x\\
     &\leq&  C(s) \left(\frac{r+\rho_1}{\rho_1}\right)^2 \rho_1^{-2s} \int_{B_{r+\rho_2}} \left(\int_{I_{r+\rho_1}^{\ominus}} w_{+}(t,x) \operatorname{d}\!t \right) \operatorname{tail}(w_{+}(x);t_0,r+\rho_1)\operatorname{d}\!x\,.
\end{eqnarray*}

A combination of dividing the integral region and utilizing the symmetry of the kernel $K$ allows us to estimate $IV$ as follows
\begin{eqnarray*}
  IV &=& \int_{I_{r+\rho_1}^{\ominus}} \int_{\mathbb{R}^d} \int_{\mathbb{R}^d} (u(t,x)-u(t,y))\left(w_+(t,x)\phi^2(x)-w_+(t,y)\phi^2(y)\right)\eta^2(t) K(t,x,y) \operatorname{d}\!y \operatorname{d}\!x \operatorname{d}\! t \\
   &=& \int_{I_{r+\rho_1}^{\ominus}} \int_{B_{r+\rho_2}} \int_{B_{r+\rho_2}} (u(t,x)-u(t,y))\left(w_+(t,x)\phi^2(x)-w_+(t,y)\phi^2(y)\right)\eta^2(t) K(t,x,y) \operatorname{d}\!y \operatorname{d}\!x \operatorname{d}\! t  \\
   &&+2\int_{I_{r+\rho_1}^{\ominus}} \int_{\mathbb{R}^d\setminus B_{r+\rho_2}} \int_{B_{r+\rho_2}} (u(t,y)-u(t,x))w_+(t,y)\phi^2(y)\eta^2(t) K(t,x,y) \operatorname{d}\!y \operatorname{d}\!x \operatorname{d}\! t\\
   &:=&IV_1+IV_2.
\end{eqnarray*}
Let the superlevel set
$$A_l:=\{u(\cdot,t)>l\}\cap B_{r+\rho_2}\,\, \mbox{for fixed}\,\, t\in I_{r+\rho_1}^{\ominus}.$$
If $x\in A_l$ and $y\in B_{r+\rho_2}\setminus A_l$, then
\begin{eqnarray*}
   && (u(t,x)-u(t,y))\left(w_+(t,x)\phi^2(x)-w_+(t,y)\phi^2(y)\right) \\
   &=& (u(t,x)-l+l-u(t,y))w_+(t,x)\phi^2(x)\\
   &=& (w_+(t,x)+w_-(t,y))w_+(t,x)\phi^2(x)\\
   &\geq& w_+(t,x)w_-(t,y)\phi^2(x).
\end{eqnarray*}
If $x,\,y\in A_l$, then
\begin{eqnarray*}
   &&(u(t,x)-u(t,y))\left(w_+(t,x)\phi^2(x)-w_+(t,y)\phi^2(y)\right) \\ &=&(w_+(t,x)-w_+(t,y))\left(w_+(t,x)\phi^2(x)-w_+(t,y)\phi^2(y)\right)\\
   &=&\left(w_+(t,x)\phi(x)-w_+(t,y)\phi(y)\right)^2-
   w_+(t,x)w_+(t,y)(\phi(x)-\phi(y))^2.
\end{eqnarray*}
While if $x,\,y\in B_{r+\rho_2}\setminus A_l$, then
$$(u(t,x)-u(t,y))\left(w_+(t,x)\phi^2(x)-w_+(t,y)\phi^2(y)\right)=0.$$
Taking into account the aforementioned estimates and combining the ellipticity condition \eqref{ellip} with the selection of $\phi$, we estimate $IV_1$ in the following manner
\begin{eqnarray*}
  IV_1 &\geq& \int_{I_{r+\rho_1}^{\ominus}} \int_{B_{r+\rho_2}} \int_{B_{r+\rho_2}} w_+(t,x)w_-(t,y)\phi^2(x)\eta^2(t) K(t,x,y) \operatorname{d}\!y \operatorname{d}\!x \operatorname{d}\! t \\
   && +\int_{I_{r+\rho_1}^{\ominus}} \int_{B_{r+\rho_2}} \int_{B_{r+\rho_2}} \left(w_+(t,x)\phi(x)-w_+(t,y)\phi(y)\right)^2\eta^2(t) K(t,x,y) \operatorname{d}\!y \operatorname{d}\!x \operatorname{d}\! t\\
&& -\int_{I_{r+\rho_1}^{\ominus}} \int_{B_{r+\rho_2}} \int_{B_{r+\rho_2}} w_+(t,x)w_+(t,y)(\phi(x)-\phi(y))^2\eta^2(t) K(t,x,y) \operatorname{d}\!y \operatorname{d}\!x \operatorname{d}\! t\\
   &\geq&\Lambda^{-1}\int_{I_{r+\rho_1}^{\ominus}} \int_{B_{r+\rho_2}} \int_{B_{r+\rho_2}} w_+(t,x)w_-(t,y)\phi^2(x)\eta^2(t) |x - y|^{-d-2s} \operatorname{d}\!y \operatorname{d}\!x \operatorname{d}\! t\\
   && +\Lambda^{-1}\int_{I_{r+\rho_1}^{\ominus}} \int_{B_{r+\rho_2}} \int_{B_{r+\rho_2}} \left(w_+(t,x)\phi(x)-w_+(t,y)\phi(y)\right)^2\eta^2(t) |x - y|^{-d-2s} \operatorname{d}\!y \operatorname{d}\!x \operatorname{d}\! t\\
   && -\Lambda\int_{I_{r+\rho_1}^{\ominus}} \int_{B_{r+\rho_2}} \int_{B_{r+\rho_2}} \left(w^2_+(t,x)+w^2_+(t,y)\right)|\nabla\phi|^2\eta^2(t)|x - y|^{-d-2s+2} \operatorname{d}\!y \operatorname{d}\!x \operatorname{d}\! t\\
     &\geq& C(\Lambda,d,s)r^{-d-2s} \int_{I_{r}^{\ominus}} \int_{B_r} \int_{B_r} w_{+}(t,x) w_{-}(t,y) \operatorname{d}\! x \operatorname{d}\! y \operatorname{d}\! t\\
   &&+\Lambda^{-1}\int_{I_{r}^{\ominus}} \int_{B_{r}} \int_{B_{r}} \frac{\left|w_+(t,x)-w_+(t,y)\right|^2}{|x - y|^{d+2s}} \operatorname{d}\!y \operatorname{d}\!x\operatorname{d}\! t\\
    && -C(\Lambda)\rho_2^{-2}\int_{I_{r+\rho_1}^{\ominus}} \int_{B_{r+\rho_2}} w^2_+(t,x)\left( \int_{B_{r+\rho_2}} |x - y|^{-d-2s+2} \operatorname{d}\!y \right) \operatorname{d}\!x \operatorname{d}\! t\\
    &\geq& C(\Lambda,d,s)r^{-d-2s} \int_{I_{r}^{\ominus}} \int_{B_r} \int_{B_r} w_{+}(t,x) w_{-}(t,y) \operatorname{d}\! x \operatorname{d}\! y \operatorname{d}\! t\\
     &&+\Lambda^{-1}\int_{I_{r}^{\ominus}} \int_{B_{r}} \int_{B_{r}} \frac{\left|w_+(t,x)-w_+(t,y)\right|^2}{|x - y|^{d+2s}} \operatorname{d}\!y \operatorname{d}\!x\operatorname{d}\! t\\
    && -C(\Lambda,d,s)(\frac{r+\rho_2}{\rho_2})^2\rho_2^{-2s}\int_{I_{r+\rho_1}^{\ominus}} \int_{B_{r+\rho_2}} w^2_+(t,x)\operatorname{d}\!x \operatorname{d}\! t\,.
\end{eqnarray*}
Regarding the estimate of $IV_2$, we once again utilize the ellipticity condition \eqref{ellip} to derive
\begin{eqnarray*}
  -IV_2 &\leq& 2\Lambda\int_{I_{r+\rho_1}^{\ominus}} \int_{\mathbb{R}^d\setminus B_{r+\rho_2}} \int_{B_{r+\rho_2}} (u(t,x)-u(t,y))_+w_+(t,y)\phi^2(y)\eta^2(t) |x - y|^{-d-2s}  \operatorname{d}\!y \operatorname{d}\!x \operatorname{d}\! t\\
  &\leq& 2\Lambda\int_{I_{r+\rho_1}^{\ominus}} \int_{B_{r+\frac{\rho_2}{2}}}w_+(t,y) \left(\int_{\mathbb{R}^d\setminus B_{r+\rho_2}} w_+(t,x)|y - x|^{-d-2s}  \operatorname{d}\!x\right) \operatorname{d}\!y \operatorname{d}\! t\\
&\leq& C(\Lambda,d,s)(\frac{r+\rho_2}{\rho_2})^d\rho_2^{-2s}\int_{I_{r+\rho_1}^{\ominus}} \left(\int_{B_{r+\rho_2}}w_+(t,x) \operatorname{d}\!x\right) \operatorname{Tail}(w_+(t);x_0,r+\rho_2) \operatorname{d}\! t\,.\\
\end{eqnarray*}
Finally, inserting the estimates for $I$ to $IV$ into \eqref{weaksol2}, we conclude that the Caccioppoli inequality \eqref{eq:Cacc} is satisfied. Hence, we complete the proof of Lemma \ref{lemma:Cacc}\,.
\end{proof}

\section{Local boundedness}\label{section3}

Equipped with the Caccioppoli inequality with tails, in this section, we demonstrate the local boundedness of weak solutions to the space-time nonlocal parabolic equation \eqref{eq1} by proving Theorem \ref{lemma:locbd}.

\begin{proof}[Proof of Theorem \ref{lemma:locbd}]
We first argue that
\begin{eqnarray}\label{bdd}
  \sup_{Q_{\frac{R}{2}}} |u| &\leq&
  C \delta^{-\frac{d+2}{2s}}\left(\fint_{Q_R} |u(t,x)|^2 \operatorname{d}\!x\operatorname{d}\!t\right)^{\frac{1}{2}} \nonumber\\
  &&+ \delta\left(\sup_{t\in I_R^{\ominus}} \operatorname{Tail}(u(t);x_0,\frac{R}{2})+\sup_{x\in B_R} \operatorname{tail}(u(x);t_0,\frac{R}{2})+R^s\|f\|_{L^\infty(I_R^{\ominus};L^{\frac{d}{s}}(B_R))}\right)
\end{eqnarray}
for any $\delta\in(0,1]$, where $C=C(d,s,\Lambda)$.

Let $r,\,\rho>0$ such that $\frac{R}{2}\leq r\leq R$  and $ \rho \leq r\leq r+\rho\leq R$.
Selecting the cut-off functions in the proof of Caccioppoli inequality as $\phi \in C_0^{\infty}(B_{r+\frac{\rho}{2}})$ with $\phi \equiv 1$ in $B_{r}$, $0 \leq \phi\leq 1$, and $|\nabla \phi| \leq C \rho^{-1}$, and $\eta \in C^{\infty}((-\infty,t_0);[0,1])$ with $\eta \equiv 1$ in $I_r^\ominus$, $\eta \equiv 0$ in $(-\infty,t_0-(r+\frac{\rho}{2})^2]$, and $|\eta'| \leq C \rho^{-2}$.
By analogy to the proof of Lemma \ref{lemma:Cacc}\,, we can deduce that
\begin{eqnarray}\label{bdd1}
 &&  \int_{I_{r+\rho}^{\ominus}}\int_{\R^d} \int_{\R^d}\frac{\left(w_{+}(t,x)\phi(x)-w_{+}(t,y)\phi(y)\right))^2}{|x-y|^{d+2s}}\eta^2(t)\operatorname{d}\! x \operatorname{d}\! y\operatorname{d}\!t \nonumber\\
 &&+\int_{B_{r}} \int_{\R} \int_{\R}\frac{\left(w_{+}(t,x)\eta(t)-w_{+}(\tau,x)\eta(\tau)\right)^2}{|t-\tau|^{1+s}}\operatorname{d}\! \tau \operatorname{d}\! t\operatorname{d}\!x\nonumber\\
 &\leq&  C \left(\frac{r+\rho}{\rho}\right)^d\rho^{-2s}\int_{I_{r + \rho}^{\ominus}} \int_{B_{r+\rho}} w^2_{+}(t,x) \operatorname{d}\! x \operatorname{d}\! t
 + C\left(\frac{r+\rho}{\rho}\right)^d \rho^{-2s} \int_{B_{r+\rho}} \int_{I_{r+\rho}^{\ominus}} w_{+}(t,x) \operatorname{d}\!t\operatorname{d}\!x\nonumber\\
 &&\times \left(\sup_{t\in I_{r+\rho}^{\ominus}}\operatorname{Tail}(w_{+}(t);x_0,r+\rho)+
 \sup_{x\in B_{r+\rho}}\operatorname{tail}(w_{+}(x);t_0,r+\rho)\right)\nonumber\\
 &&+C\int_{I_{r+\rho}^{\ominus}} \int_{B_{r+\rho}}f(t,x)w_+(t,x)\phi^2(x)\eta^2(t)\operatorname{d}\!x \operatorname{d}\! t.
\end{eqnarray}
Applying H\"{o}lder’s inequality, Young’s inequality, and fractional Sobolev inequality given by Lemma \ref{localfracsobolev} to estimate the last term in \eqref{bdd1}, we
calculate that
\begin{eqnarray*}
  &&\int_{I_{r+\rho}^{\ominus}} \int_{B_{r+\rho}}f(t,x)w_+(t,x)\phi^2(x)\eta^2(t)\operatorname{d}\!x \operatorname{d}\! t\\
  &\leq& \int_{I_{r+\rho}^{\ominus}} \left(\int_{B_{r+\rho}}\left(w_+(t,x)\phi(x)\right)^{\frac{2d}{d-2s}}\operatorname{d}\! x\right)^{\frac{d-2s}{2d}}\left(\int_{B_{r+\rho}}|f(t,x)|^{\frac{2d}{d+2s}}\chi_{\{u>l\}}\operatorname{d}\! x\right)^{\frac{d+2s}{2d}}\eta^2(t)\operatorname{d}\! t  \\
   &\leq&  \varepsilon\int_{I_{r+\rho_1}^{\ominus}} \left(\int_{B_{r+\rho}}\left(w_+(t,x)\phi(x)\right)^{\frac{2d}{d-2s}}\operatorname{d}\! x\right)^{\frac{d-2s}{d}}\eta^2(t)\operatorname{d}\! t\\
   &&+C(\varepsilon)\int_{I_{r+\rho}^{\ominus}}\left(\int_{B_{r+\rho_2}}|f(t,x)|^{\frac{2d}{d+2s}}\chi_{\{u>l\}}\operatorname{d}\! x\right)^{\frac{d+2s}{d}}\operatorname{d}\! t\\
   &\leq&C_0(d,s)\varepsilon\left(\int_{I_{r+\rho}^{\ominus}} \int_{B_{r+\rho}} \int_{B_{r+\rho}} \frac{\left(w_+(t,x)\phi(x)-w_+(t,y)\phi(y)\right)^2}{|x - y|^{d+2s} } \operatorname{d}\!y \operatorname{d}\!x\eta^2(t) \operatorname{d}\! t\right.\\
   &&\left.+(r+\rho)^{-2s}\int_{I_{r+\rho}^{\ominus}} \int_{B_{r+\rho}} w_+^2(t,x)\operatorname{d}\! x\operatorname{d}\! t\right)\\
   &&+C(\varepsilon)\int_{I_{r+\rho}^{\ominus}}\left(\int_{B_{r+\rho}}|f(t,x)|^{\frac{d}{s}}
   \operatorname{d}\! x\right)^{\frac{2s}{d}}\left( \int_{B_{r+\rho}}
   \chi_{\{u>l\}}\operatorname{d}\! x\right)\operatorname{d}\! t\\
   &\leq&C_0\varepsilon\left(\int_{I_{r+\rho}^{\ominus}} \int_{B_{r+\rho}} \int_{B_{r+\rho}} \frac{\left(w_+(t,x)\phi(x)-w_+(t,y)\phi(y)\right)^2}{|x - y|^{d+2s} } \operatorname{d}\!y \operatorname{d}\!x\eta^2(t) \operatorname{d}\! t\right.\\
   &&\left.+(\frac{r+\rho}{\rho})^d\rho^{-2s}\int_{I_{r+\rho}^{\ominus}} \int_{B_{r+\rho}} w_+^2(t,x)\operatorname{d}\! x\operatorname{d}\! t\right)\\
   &&+C(\varepsilon)\|f\|^2_{L^{\infty}(I_{r+\rho}^{\ominus};L^{\frac{d}{s}}(B_{r+\rho}))}
   \int_{I_{r+\rho}^{\ominus}} \int_{B_{r+\rho}}
   \chi_{\{u>l\}}\operatorname{d}\! x\operatorname{d}\! t\,.
\end{eqnarray*}
By selecting $\varepsilon=\frac{1}{2C_0C}$, and substituting the aforementioned inequality into \eqref{bdd1}, we derive
\begin{eqnarray*}
 &&  \int_{I_{r}^{\ominus}}\int_{\R^d} \int_{\R^d}\frac{\left(w_{+}(t,x)\phi(x)-w_{+}(t,y)\phi(y)\right))^2}{|x-y|^{d+2s}}\operatorname{d}\! x \operatorname{d}\! y\operatorname{d}\!t \nonumber\\
 &&+\int_{B_{r}} \int_{\R} \int_{\R}\frac{\left(w_{+}(t,x)\eta(t)-w_{+}(\tau,x)\eta(\tau)\right)^2}{|t-\tau|^{1+s}}\operatorname{d}\! \tau \operatorname{d}\! t\operatorname{d}\!x\nonumber\\
 &\leq&  C \left(\frac{r+\rho}{\rho}\right)^d\rho^{-2s}\int_{I_{r + \rho}^{\ominus}} \int_{B_{r+\rho}} w^2_{+}(t,x) \operatorname{d}\! x \operatorname{d}\! t
 + C\left(\frac{r+\rho}{\rho}\right)^d \rho^{-2s} \int_{B_{r+\rho}} \int_{I_{r+\rho}^{\ominus}} w_{+}(t,x) \operatorname{d}\!t\operatorname{d}\!x\nonumber\\
 &&\times \left(\sup_{t\in I_{r+\rho}^{\ominus}}\operatorname{Tail}(w_{+}(t);x_0,r+\rho)+
 \sup_{x\in B_{r+\rho}}\operatorname{tail}(w_{+}(x);t_0,r+\rho)\right)\nonumber\\
 &&+C\|f\|^2_{L^{\infty}(I_{r+\rho}^{\ominus};L^{\frac{d}{s}}(B_{r+\rho}))}
   \int_{I_{r+\rho}^{\ominus}} \int_{B_{r+\rho}}
   \chi_{\{u>l\}}\operatorname{d}\! x\operatorname{d}\! t\,.
\end{eqnarray*}
By further applying the fractional Sobolev embedding inequality as established in Lemma \ref{fracsobolev}, respectively to time and space, we obtain
\begin{eqnarray*}
 &&  \int_{I_{r}^{\ominus}}\left(\int_{B_r} w^{\frac{2d}{d-2s}}_{+}(t,x)\operatorname{d}\! x \right)^{\frac{d-2s}{d}}\operatorname{d}\!t +\int_{B_r}\left(\int_{I_{r}^{\ominus}} w^{\frac{2}{1-s}}_{+}(t,x) \operatorname{d}\! t\right)^{1-s}\operatorname{d}\!x\nonumber\\
 &\leq&  C \left(\frac{r+\rho}{\rho}\right)^d\rho^{-2s}\int_{I_{r + \rho}^{\ominus}} \int_{B_{r+\rho}} w^2_{+}(t,x) \operatorname{d}\! x \operatorname{d}\! t
 + C\left(\frac{r+\rho}{\rho}\right)^d \rho^{-2s} \int_{B_{r+\rho}} \int_{I_{r+\rho}^{\ominus}} w_{+}(t,x) \operatorname{d}\!t\operatorname{d}\!x\nonumber\\
 &&\times \left(\sup_{t\in I_{r+\rho}^{\ominus}}\operatorname{Tail}(w_{+}(t);x_0,r+\rho)+
 \sup_{x\in B_{r+\rho}}\operatorname{tail}(w_{+}(x);t_0,r+\rho)\right)\\
  &&+C\|f\|^2_{L^{\infty}(I_{r+\rho}^{\ominus};L^{\frac{d}{s}}(B_{r+\rho}))}
   \int_{I_{r+\rho}^{\ominus}} \int_{B_{r+\rho}}
   \chi_{\{u>l\}}\operatorname{d}\! x\operatorname{d}\! t\,.
\end{eqnarray*}
Next, applying H\"{o}lder's inequality twice and combining with Minkowski's inequality, we directly calculate that
\begin{eqnarray}\label{bdd2}
   \|w^2_+\|_{L^p(I_{r}^{\ominus}\times B_{r})}
 &\leq&  C \sigma(r,\rho)\|w^2_+\|_{L^1(I_{r+\rho}^{\ominus}\times B_{r+\rho})}
 + C\sigma(r,\rho)\|w_+\|_{L^1(I_{r+\rho}^{\ominus}\times B_{r+\rho})}\nonumber\\
 &&\times \left(\sup_{t\in I_{r+\rho}^{\ominus}}\operatorname{Tail}(w_{+}(t);x_0,r+\rho)+
 \sup_{x\in B_{r+\rho}}\operatorname{tail}(w_{+}(x);t_0,r+\rho)\right)\nonumber\\
  &&+C\|f\|^2_{L^{\infty}(I_{r+\rho}^{\ominus};L^{\frac{d}{s}}(B_{r+\rho}))}
   \int_{I_{r+\rho}^{\ominus}} \int_{B_{r+\rho}}
   \chi_{\{u>l\}}\operatorname{d}\! x\operatorname{d}\! t\,,
\end{eqnarray}
where $p:=\frac{d+2}{d+2(1-s)}>1$ and $\sigma(r,\rho):=\left(\frac{r+\rho}{\rho}\right)^d\rho^{-2s}$. For $l>0$, we rename $w_l(t,x):=(u(t,x)-l)_+$ and set
$$|A(l,r)|:= \int_{I_{r}^{\ominus}} \left|\{x\in B_r \mid u(t,x)>l\}\right|\operatorname{d}\! t\,,$$
then a combination of \eqref{bdd2} with H\"{o}lder's inequality yields that
\begin{eqnarray}\label{bdd3}
\|w^2_l\|_{L^1(I_{r}^{\ominus}\times B_{r})}
&\leq& |A(l,r)|^{\frac{1}{p'}}
\|w^2_l\|_{L^p(I_{r}^{\ominus}\times B_{r})}\nonumber\\
&\leq&C  |A(l,r)|^{\frac{1}{p'}}\sigma(r,\rho)\left[\|w^2_l\|_{L^1(I_{r+\rho}^{\ominus}\times B_{r+\rho})}\right.\nonumber\\
&& + \|w_l\|_{L^1(I_{r+\rho}^{\ominus}\times B_{r+\rho})} \left(\sup_{t\in I_{r+\rho}^{\ominus}}\operatorname{Tail}(u(t);x_0,r+\rho)+
 \sup_{x\in B_{r+\rho}}\operatorname{tail}(u(x);t_0,r+\rho)\right)\nonumber\\
 &&\left.\rho^{2s}|A(l,r+\rho)|\|f\|^2_{L^{\infty}(I_{r+\rho}^{\ominus};L^{\frac{d}{s}}(B_{r+\rho}))}\right]\,.
\end{eqnarray}
Let $k\in(0,l)$ be arbitrary, then there holds that
\begin{eqnarray*}
  \|w^2_l\|_{L^1(I_{r+\rho}^{\ominus}\times B_{r+\rho})} &\leq& \|w^2_k\|_{L^1(I_{r+\rho}^{\ominus}\times B_{r+\rho})}\,, \\
  \|w_l\|_{L^1(I_{r+\rho}^{\ominus}\times B_{r+\rho})} &\leq& \frac{\|w^2_k\|_{L^1(I_{r+\rho}^{\ominus}\times B_{r+\rho})}}{l-k}\,, \\
   |A(l,r)|\leq|A(l,r+\rho)| &\leq&\frac{\|w^2_k\|_{L^1(I_{r+\rho}^{\ominus}\times B_{r+\rho})}}{(l-k)^2}\,,
\end{eqnarray*}
where the Chebyshev's inequality is employed in the last estimate.
Substituting such three inequalities into \eqref{bdd3}, we derive
\begin{eqnarray}\label{bdd4}
\|w^2_l\|_{L^1(I_{r}^{\ominus}\times B_{r})}
&\leq&C(l-k)^{-\frac{2}{p'}}\sigma(r,\rho) \|w^2_k\|_{L^1(I_{r+\rho}^{\ominus}\times B_{r+\rho})}^{1+\frac{1}{p'}}\nonumber\\
&&\times \left(1+\frac{\displaystyle\sup_{t\in I_{r+\rho}^{\ominus}}\operatorname{Tail}(u(t);x_0,r+\rho)+
 \sup_{x\in B_{r+\rho}}\operatorname{tail}(u(x);t_0,r+\rho)}{l-k}\right.\nonumber\\
 &&\left.+\frac{\rho^{2s}\|f\|^2_{L^{\infty}(I_{r+\rho}^{\ominus};L^{\frac{d}{s}}(B_{r+\rho}))}}{(l-k)^2}\right),
\end{eqnarray}
where the positive constant $C$ only depends on $d$, $s$, and $\Lambda$.

To proceed, we establish the De Giorgi iteration scheme to iterate the key estimate \eqref{bdd4}. For any $i\in \mathbb{N}$, we define
\begin{eqnarray*}
   && l_i=M(1-2^{-i}),\, \, w_i=(u-l_i)_+, \,\, A_i=\|w^2_i\|_{L^1(I_{r_i}^{\ominus}\times B_{r_i})},\\
   && \rho _i=2^{-i-1}R, \,\, r_0=R, \,\, r_{i+1}=r_i-\rho_{i+1}=\frac{R}{2}\left(1+\big(\frac{1}{2}\big)^{i+1}\right),
\end{eqnarray*}
where $M>0$ is to be determined later. It is easy to verify that $\rho_i\leq\frac{R}{2}\leq r_i\leq r_i+\rho_i\leq R$, and $\rho_i\searrow 0$, $r_i\searrow \frac{R}{2}$, $l_i\nearrow M$ as $i\rightarrow\infty$.
Hence, it follows from \eqref{bdd4} that
\begin{eqnarray*}
  A_{i+1} &\leq&\frac{C\sigma(r_{i+1},\rho_{i+1})}{(l_{i+1}-l_{i})^{\frac{2}{p'}}} A_{i}^{1+\frac{1}{p'}}\\
  &&\times\left(1+
  \frac{\displaystyle\sup_{t\in I_{r_{i+1}+\rho_{i+1}}^{\ominus}}\operatorname{Tail}(u(t);x_0,r_{i+1}+\rho_{i+1})+
 \sup_{x\in B_{r_{i+1}+\rho_{i+1}}}\operatorname{tail}(u(x);t_0,r_{i+1}+\rho_{i+1})}{l_{i+1}-l_{i}}\right.\\
 &&\left.+\frac{\rho_{i+1}^{2s}\|f\|^2_{L^{\infty}(I_{r_{i+1}+\rho_{i+1}}^{\ominus};L^{\frac{d}{s}}(B_{r_{i+1}+\rho_{i+1}}))}}{(l_{i+1}-l_i)^2}
 \right) \\
 &\leq&\frac{C2^{(2s+d+\frac{2}{p'}+2)i}}{R^{2s}M^{\frac{2}{p'}}} A_{i}^{1+\frac{1}{p'}}\\
 &&\times\left(1+
  \frac{\displaystyle\sup_{t\in I_{R}^{\ominus}}\operatorname{Tail}(u(t);x_0,\frac{R}{2})+
 \sup_{x\in B_{R}}\operatorname{tail}(u(x);t_0,\frac{R}{2})}{M}+\frac{R^{2s}\|f\|^2_{L^\infty(I_R^\ominus;L^{\frac{d}{s}}(B_R))}}{M^2}\right) \,.
\end{eqnarray*}
For any fixed $\delta\in(0,1]$, we select
\begin{equation*}
  M\geq \delta^2\left(\displaystyle\sup_{t\in I_{R}^{\ominus}}\operatorname{Tail}(u(t);x_0,\frac{R}{2})+
 \sup_{x\in B_{R}}\operatorname{tail}(u(x);t_0,\frac{R}{2})\right)+\delta R^{s}\|f\|_{L^\infty(I_R^\ominus;L^{\frac{d}{s}}(B_R))}\,.
\end{equation*}
Denoting $b=2^{2s+d+\frac{2}{p'}+2}>1$, then we deduce that
\begin{equation}\label{bdd5}
  A_{i+1} \leq\frac{C_1 }{\delta^2 R^{2s}M^{\frac{2}{p'}}} b^i A_{i}^{1+\frac{1}{p'}},
\end{equation}
where $C_1$ is a positive constant depending exclusively on $d$, $s$, and $\Lambda$. We further choose
\begin{equation*}
  M=\delta^2\left(\displaystyle\sup_{t\in I_{R}^{\ominus}}\operatorname{Tail}(u(t);x_0,\frac{R}{2})+
 \sup_{x\in B_{R}}\operatorname{tail}(u(x);t_0,\frac{R}{2})\right)+\delta R^{s}\|f\|_{L^\infty(I_R^\ominus;L^{\frac{d}{s}}(B_R))}+C_1^{\frac{p'}{2}}b^{\frac{p'^2}{2}}\delta^{-p'}R^{-sp'}A_0^{\frac{1}{2}},
\end{equation*}
then it leads to
\begin{equation}\label{bdd6}
  A_{0} \leq\left(\frac{C_1}{\delta^2 R^{2s}M^{\frac{2}{p'}}}\right)^{-p'}
   b^{-p'^2}\,.
\end{equation}
In terms of \eqref{bdd5} and \eqref{bdd6}, we utilize the well-known iteration lemma given by Lemma \ref{iterate1} to derive $\displaystyle\lim_{i\rightarrow\infty}A_i=0$, which implies that
\begin{eqnarray*}
  \sup_{I_{\frac{R}{2}}^{\ominus}\times B_{\frac{R}{2}}}u(t,x) &\leq& M \\
   &\leq&C\delta^{-p'}\left(R^{-2sp'}\int_{I_{R}^{\ominus}} \int_{B_{R}} u^2(t,x) \operatorname{d}\! x \operatorname{d}\! t\right)^{\frac{1}{2}}\\
   &&+
   \delta\left(\displaystyle\sup_{t\in I_{R}^{\ominus}}\operatorname{Tail}(u(t);x_0,\frac{R}{2})+
 \sup_{x\in B_{R}}\operatorname{tail}(u(x);t_0,\frac{R}{2})+R^{s}\|f\|_{L^\infty(I_R^\ominus;L^{\frac{d}{s}}(B_R))}\right)\,.
\end{eqnarray*}
Since the conjugate of $p$, denoted as $p'=\frac{d+2}{2s}$, then it follows that
\begin{eqnarray}\label{bdd7}
  \sup_{ Q_{\frac{R}{2}}}u(t,x)
   &\leq&C\delta^{-\frac{d+2}{2s}}\left(\fint_{Q_{R}} u^2(t,x) \operatorname{d}\! x \operatorname{d}\! t\right)^{\frac{1}{2}}\nonumber\\
   &&+
   \delta\left(\displaystyle\sup_{t\in I_{R}^{\ominus}}\operatorname{Tail}(u(t);x_0,\frac{R}{2})+
 \sup_{x\in B_{R}}\operatorname{tail}(u(x);t_0,\frac{R}{2})+ R^{s}\|f\|_{L^\infty(I_R^\ominus;L^{\frac{d}{s}}(B_R))}\right)\,.\nonumber\\
\end{eqnarray}
Moreover, it is feasible to show that the weak solution $u$ is locally bounded from below, fulfilling an estimate analogous to \eqref{bdd7}. The proof follows the same arguments as before, instead of dealing with the previously defined sequence $w_i$, we now consider $w_i=(u-l_i)_-$, and use the Caccippoli inequality \eqref{eq:Cacc} for $w_-$. Hence, we confirm the validity of the assertion \eqref{bdd}.

Based on \eqref{bdd}, we further calculate that
\begin{eqnarray}\label{bdd8}
  \sup_{Q_{\frac{R}{2}}} |u| &\leq& C\delta \sup_{Q_{R}} |u|+
  C\delta^{-\frac{d+2}{2s}}\left(\fint_{Q_{R}} u^2(t,x) \operatorname{d}\! x \operatorname{d}\! t\right)^{\frac{1}{2}}\nonumber\\
   &&+
   \delta\left(\displaystyle\sup_{t\in I_{R}^{\ominus}}\operatorname{Tail}(u(t);x_0,R)+
 \sup_{x\in B_{R}}\operatorname{tail}(u(x);t_0,R)+ R^{s}\|f\|_{L^\infty(I_R^\ominus;L^{\frac{d}{s}}(B_R))}\right)\,,
\end{eqnarray}
where $C=C(d,s,\Lambda)>0$. Let $\frac{R}{2}\leq\varrho_1<\varrho_2\leq R$, performing a standard covering argument to \eqref{bdd8} and combining with Young's inequality, we derive
\begin{eqnarray*}
  \sup_{Q_{\varrho_1}} |u| &\leq& C_2\delta \sup_{Q_{\varrho_2}} |u|+
  C(\varrho_2-\varrho_1)^{-(d+2)}\int_{Q_{R}} |u(t,x)| \operatorname{d}\! x \operatorname{d}\! t\nonumber\\
   &&+
   C\left(\displaystyle\sup_{t\in I_{R}^{\ominus}}\operatorname{Tail}(u(t);x_0,R)+
 \sup_{x\in B_{R}}\operatorname{tail}(u(x);t_0,R)+ R^{s}\|f\|_{L^\infty(I_R^\ominus;L^{\frac{d}{s}}(B_R))}\right)\,,
\end{eqnarray*}
where the positive constants $C_2=C_2(d,s,\Lambda)$ and $C=C(d,s,\Lambda,\delta)$.
If we choose $\delta=\frac{1}{2C_2}$, then
\begin{eqnarray*}
  \sup_{Q_{\varrho_1}} |u| &\leq& \frac{1}{2} \sup_{Q_{\varrho_2}} |u|+
  C(\varrho_2-\varrho_1)^{-(d+2)}\int_{Q_{R}} |u(t,x)| \operatorname{d}\! x \operatorname{d}\! t\nonumber\\
   &&+
   C\left(\displaystyle\sup_{t\in I_{R}^{\ominus}}\operatorname{Tail}(u(t);x_0,R)+
 \sup_{x\in B_{R}}\operatorname{tail}(u(x);t_0,R)+ R^{s}\|f\|_{L^\infty(I_R^\ominus;L^{\frac{d}{s}}(B_R))}\right)\,,
\end{eqnarray*}
where $C=C(d,s,\Lambda)>0$. Finally, we absorb the first term on the right side by using an iteration Lemma \ref{iterate3} to deduce that the local boundedness estimate \eqref{locbd} holds. Hence, we complete the proof of Theorem \ref{lemma:locbd}.
\end{proof}

\section{Local H\"{o}lder regularity}\label{section4}

This section is devoted to the proof of the local H\"{o}lder continuity of weak solutions for the space-time nonlocal parabolic equations \eqref{eq1}, namely Theorem \ref{prop:Holderq}. Having  Caccioppoli inequality and local boundedness at our disposal,
it is crucial for us to further establish a growth lemma. For ease of notation, we omit the vertex $(x_0,t_0)$ from the parabolic cylinder $Q_{R}(x_0,t_0)$ in the following proof.
Now we first introduce a De Giorgi type of lemma.

\begin{lemma}
\label{lemma:growth-lemma1}
Let $d>2$, $q=\frac{d}{s}$ and $u$ be a local weak supersolution to \eqref{eq1} in $I\times\Omega$.
For every $R>0$ and $(t_0,x_0)\in I\times\Omega$ with $I^\ominus_{2R}(t_0)\times B_{2R}(x_0)\Subset I\times\Omega$, any $\delta \in (0,1]$ and $H > 0$, there exists $\nu \in (0,1)$ depending only on $d,\,s,\,\Lambda,\,\delta$, such that if  $u \geq 0$ in $Q_{2R}(t_0,x_0)$, as well as
\begin{equation}\label{ularge}
\left| \left\{ u \leq H \right\} \cap I^{\ominus}_{\delta R}(t_0) \times B_R (x_0) \right| \leq \nu |I^{\ominus}_{\delta R} (t_0)\times B_R(x_0)|,
\end{equation}
\begin{equation}\label{eq:Tail-small}
\left(\fint_{I_{2R}^{\ominus}(t_0)} \operatorname{Tail}(u_-(t);x_0,2R)^q \operatorname{d}\! t \right)^{\frac{1}{q}} \leq H,
\end{equation}
\begin{equation}\label{eq:tail-small}
\left(\fint_{B_{2R}(x_0)} \operatorname{tail}(u_-(x);t_0,2\delta R)^q \operatorname{d}\! x \right)^{\frac{1}{q}}\leq H,
\end{equation}
and
\begin{equation}\label{eq:f-small}
R^s\left(\fint_{I_{2R}^{\ominus}(t_0)}\int_{B_{2R}(x_0)} |f(t,x)|^q \operatorname{d}\! x \operatorname{d}\! t \right)^{\frac{1}{q}}\leq H,
\end{equation}
then it holds that
\begin{equation*}
u(t,x) \geq \frac{H}{2} ~~ \text{ in } I^{\ominus}_{\frac{\delta R}{2}}(t_0) \times B_{\frac{R}{2}}(x_0).
\end{equation*}
\end{lemma}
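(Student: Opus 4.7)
The plan is to run a De~Giorgi iteration on a family of rectangular cylinders contracting to $I^{\ominus}_{\delta R/2}(t_0)\times B_{R/2}(x_0)$, aiming to show that $(H/2-u)_+$ vanishes there. For $j\geq 0$ define
\[
r_j := \tfrac{R}{2}+\tfrac{R}{2^{j+1}},\quad \tau_j := \tfrac{\delta R}{2}+\tfrac{\delta R}{2^{j+1}},\quad k_j := \tfrac{H}{2}+\tfrac{H}{2^{j+1}},
\]
and set $w_j := (k_j-u)_+$, $Q_j := I^{\ominus}_{\tau_j}(t_0)\times B_{r_j}(x_0)$, $A_j := \iint_{Q_j} w_j^2 \operatorname{d}\! x\operatorname{d}\! t$, and $A_j^* := |\{w_j>0\}\cap Q_j|$. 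The target is a nonlinear recursion $\widetilde A_{j+1}\leq C b^j\, \widetilde A_j^{1+\sigma}$ for $\widetilde A_j := A_j/(H^2|Q_0|)$, after which Lemma~\ref{iterate1} together with \eqref{ularge} closes the argument.

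First, I would apply a rectangular version of Lemma~\ref{lemma:Cacc} (for supersolutions, on $w_-$ with level $k_j$), testing with $\varphi = \eta^2(t)\phi^2(x) w_j$ where $\phi\in C_c^\infty(B_{r_j})$ satisfies $\phi\equiv 1$ on $B_{r_{j+1}}$ and $|\nabla\phi|\lesssim 2^j/R$, while $\eta$ vanishes before $t_0-\tau_j^2$, equals $1$ on $I^\ominus_{\tau_{j+1}}$, and $|\eta'|\lesssim 4^j/(\delta R)^2$. The computation in Lemma~\ref{lemma:Cacc} carries over with these separate spatial and temporal cut-offs, bounding the full Gagliardo energy of $w_j$ on $Q_{j+1}$ by (i) a $c^j(\delta R)^{-2s}$-type constant times $\|w_j\|_{L^2(Q_j)}^2$, (ii) products of $\iint w_j$ against $\operatorname{Tail}(w_j(t);x_0,r_{j+1})$ and $\operatorname{tail}(w_j(x);t_0,\tau_{j+1})$, and (iii) the forcing integral $\iint f\,w_j$.

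Next, to reduce the tails to the hypotheses, exploit $u\geq 0$ in $Q_{2R}$: then $w_j\leq k_j\leq H$ inside $B_{2R}$ (resp.\ inside $(t_0-(2\delta R)^2,t_0)$) and $w_j\leq H+u_-$ otherwise. Splitting each tail at $B_{2R}$ (resp.\ $t_0-(2\delta R)^2$) and using $r_{j+1}\geq R/2$, $\tau_{j+1}\geq \delta R/2$, yields the pointwise bounds
\[
\operatorname{Tail}(w_j(t);x_0,r_{j+1})\leq C\bigl(H+\operatorname{Tail}(u_-(t);x_0,2R)\bigr),
\]
\[
\operatorname{tail}(w_j(x);t_0,\tau_{j+1})\leq C\bigl(H+\operatorname{tail}(u_-(x);t_0,2\delta R)\bigr),
\]
so that \eqref{eq:Tail-small} and \eqref{eq:tail-small} produce $L^q$-in-$t$ (resp.\ $x$) control by $CH$. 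I would then follow the path from \eqref{bdd1} to \eqref{bdd3}: apply Lemma~\ref{localfracsobolev} in $x$ and Lemma~\ref{fracsobolev} in $t$ to upgrade from $L^2$ to $L^p(Q_{j+1})$ with $p=(d+2)/(d+2(1-s))>1$, handling the forcing term by the H\"older--Sobolev absorption trick of Section~\ref{section3} with exponent $q=d/s$ and \eqref{eq:f-small}. The level-change inclusion $\{w_{j+1}>0\}\subset\{w_j\geq k_j-k_{j+1}\}$ combined with Chebyshev gives $A_{j+1}^*\leq C\cdot 4^j A_j/H^2$; pairing the tail and forcing contributions with H\"older factors $|A_{j+1}^*|^{1-1/q}$ (against their $L^q$-bounds) and closing via $A_{j+1}\leq (A_{j+1}^*)^{1/p'}\|w_j^2\|_{L^p(Q_{j+1})}$ produces the recursion. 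Since \eqref{ularge} gives $A_0\leq H^2\nu|Q_0|$, taking $\nu=\nu(d,s,\Lambda,\delta)$ small enough triggers Lemma~\ref{iterate1} and forces $A_j\to 0$, hence $u\geq H/2$ on $I^\ominus_{\delta R/2}(t_0)\times B_{R/2}(x_0)$.

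The hard part will be the tail management: naively each tail contributes a factor $H$ rather than a power of $A_j$, which would render the recursion only linear. The fix is to spend a factor $|A_{j+1}^*|^{1-1/q}$ by H\"older against the $L^q$-control of the tails and then convert $|A_{j+1}^*|$ into a super-linear power of $A_j$ via Chebyshev; the analogous accounting, combined with \eqref{eq:f-small}, absorbs the forcing term. Tracking these powers of $|A_{j+1}^*|$ and $H$ carefully throughout, so that they combine into a single clean exponent $1+\sigma>1$ on $\widetilde A_j$, is the bookkeeping where most of the effort lies.
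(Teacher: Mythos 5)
Your plan is essentially the same De~Giorgi iteration the paper carries out, and the key estimates you list — the supersolution Caccioppoli inequality, using $u\ge 0$ in $Q_{2R}$ to reduce $\operatorname{Tail}(w_j)$ and $\operatorname{tail}(w_j)$ to $H$ plus the hypothesized $L^q$-controlled tails of $u_-$, H\"older-ing the tail and forcing contributions against those $L^q$ bounds to extract a factor $|A^*|^{1-1/q}$, and pairing it with the fractional Sobolev gain $|A^*|^{1/p'}$ with $p=(d+2)/(d+2(1-s))$ — match the paper's growth1-1 to growth1-3 line by line. The organizational difference is that you aim for a single-sequence recursion $\widetilde A_{j+1}\le Cb^j\widetilde A_j^{1+\sigma}$ with the $L^2$-energy $\iint w_j^2$ and Lemma~\ref{iterate1}, whereas the paper works with a pair of measure-type sequences $A_i$ and $B_i$ and invokes the coupled Lemma~\ref{iterate2}. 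This is a genuine but cosmetic difference: the paper's $B_i$ is in fact a fixed power of $A_i$ (indeed $B_i^{1+\kappa}=A_i^{(q-1)/q}$), so their two-sequence formulation is also a single-sequence recursion in disguise; both close for the same reason, namely $\frac{1}{p'}+\frac{1}{q'}=\frac{2s}{d+2}+1-\frac{s}{d}>1$, which is precisely where the hypothesis $d>2$ enters (equivalently $\kappa=\frac{2qs-d-2}{q(d+2-2s)}>0$).

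Two places you should be more careful. First, your proposal never makes the role of $d>2$ explicit; the whole argument hinges on verifying $\frac{2s}{d+2}+1-\frac{s}{d}>1\iff d>2$, and a referee would want this spelled out. Second, there is an indexing lag you gloss over: the Caccioppoli estimate at level $k_j$ on the annular cylinder $Q_j\setminus Q_{j+1}$ naturally produces the measure $A_j^*=|\{w_j>0\}\cap Q_j|$, while Chebyshev only gives $A_j^*\lesssim 4^j A_{j-1}/H^2$, so a naive write-up yields a two-step recursion in $A_{j-1},A_j$. The standard fix — which the paper implements via the intermediate levels $\hat k_i=(k_i+k_{i+1})/2$ and radii $\hat R_i$, or which you could avoid entirely by iterating the measure sequence $A_j^*$ itself rather than the $L^2$-energy — needs to be stated. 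Also, for the forcing term you invoke the Young--Sobolev absorption of Section~\ref{section3}, whereas the paper here uses the simpler direct H\"older $\int|f|w_i\le H\|f\|_{L^q}|A^*|^{1/q'}$; both work, but the latter keeps the exponent accounting parallel to the tail terms, which is cleaner.
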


\begin{proof}
For $i\in\mathbb{N_+}$, we define the following sequences
\begin{eqnarray*}
&& k_i = \frac{H}{2} + \frac{H}{2^i}, \,\, \hat{k}_i = \frac{k_i + k_{i+1}}{2}, \,\, w_i = (u-k_{i})_-, \\
&& R_i = \frac{R}{2} + \frac{R}{2^i}, \,\, \hat{R}_i = \frac{R_i + R_{i+1}}{2}\\
&& A_i= \frac{\left| \left\{ u \leq k_i \right\} \cap I^{\ominus}_{\delta R_i} \times B_{R_i} \right|}{|I^{\ominus}_{\delta R_i} \times B_{R_i}|}, \,\, B_i = \left( \fint_{I_{\delta R_i}^{\ominus}} \left( \frac{|\{ u(t) \leq k_i \} \cap B_{R_i}|}{|B_{R_i}|} \right)\operatorname{d}\! t \right)^{\frac{q-1}{q(1+\kappa)}},
\end{eqnarray*}
where $\kappa=\frac{2qs-d-2}{q(d+2-2s)}>0$ by virtue of $d>2$.
Combining the Caccioppoli inequality \eqref{eq:Cacc} with H\"{o}lder's inequality, the conditions \eqref{eq:Tail-small}-\eqref{eq:f-small} and $q=\frac{d}{s}>1$, we directly compute
\begin{eqnarray}\label{growth1-1}
 &&  \int_{I_{\delta R_{i+1}}^{\ominus}}\int_{B_{R_{i+1}}} \int_{B_{R_{i+1}}}\frac{|w_{i}(t,x)-w_{i}(t,y)|^2}{|x-y|^{d+2s}}\operatorname{d}\! x \operatorname{d}\! y\operatorname{d}\!t \nonumber\\
 &&+\int_{B_{R_{i+1}}}\int_{I_{\delta R_{i+1}}^{\ominus}} \int_{I_{\delta R_{i+1}}^{\ominus}} \frac{|w_{i}(t,x)-w_{i}(\tau,x)|^2}{|t-\tau|^{1+s}}\operatorname{d}\! \tau \operatorname{d}\! t\operatorname{d}\!x \nonumber \\
 &\leq&  C 2^{i(2+2s)}(\delta R)^{-2s} \int_{I_{\delta \hat{R}_i}^{\ominus}} \int_{B_{\hat{R}_i}} w^2_{i}(t,x) \operatorname{d}\! x \operatorname{d}\! t\nonumber\\
 &&+ C2^{i(d+2s)}(\delta R)^{-2s} \left[\int_{B_{\hat{R}_i}} \left(\int_{I_{\delta\hat{R}_i}^{\ominus}} w_{i}(t,x) \operatorname{d}\!t \right) \operatorname{tail}(w_{i}(x);t_0,\delta\hat{R}_i)\operatorname{d}\!x \right. \nonumber\\
 &&\left.+ \delta^{2s} \int_{I_{\delta\hat{R}_i}^{\ominus}} \left(\int_{B_{\hat{R}_i}} w_{i}(t,x) \operatorname{d}\!x \right) \operatorname{Tail}(w_{i}(t);x_0,\hat{R}_i)\operatorname{d}\!t\right]+\int_{I_{\delta\hat{R}_i}^{\ominus}}
 \int_{B_{\hat{R}_i}} w_{i}(t,x) |f(t,x)|\operatorname{d}\!x\operatorname{d}\!t\nonumber\\
 &\leq&C2^{i(d+2s)}R^{-2s}H^2|Q_{R_i}|A_i\nonumber\\
 &&+C2^{i(d+2s)}(\delta R)^{-2s}H\left[\delta^{2s} \int_{I_{\delta\hat{R}_i}^{\ominus}} \left(\int_{B_{\hat{R}_i}} \chi_{\{u\leq k_i\}}(t,x) \operatorname{d}\!x\right)\operatorname{Tail}(w_{i}(t);x_0,2R)\operatorname{d}\!t
 \right.\nonumber\\
 &&+\left.\int_{B_{\hat{R}_i}} \left(\int_{I_{\delta\hat{R}_i}^{\ominus}} \chi_{\{u\leq k_i\}}(t,x) \operatorname{d}\!t \right) \operatorname{tail}(w_{i}(x);t_0,2\delta R)\operatorname{d}\!x \right]\nonumber\\
&& +H\left(\int_{I_{\delta\hat{R}_i}^{\ominus}}
 \int_{B_{\hat{R}_i}} |f(t,x)|^q\operatorname{d}\!x\operatorname{d}\!t\right)^{\frac{1}{q}}\left(
 \int_{I_{\delta\hat{R}_i}^{\ominus}}
 \int_{B_{\hat{R}_i}}  \chi_{\{u\leq k_i\}}(t,x)\operatorname{d}\!x\operatorname{d}\!t\right)^{\frac{1}{q'}}
\nonumber\\
  &\leq&C2^{i(d+2s)}R^{-2s}H^2|Q_{R_i}|A_i+C2^{i(d+2s)}(\delta R)^{-2s}H\left[ \delta^2H|Q_{R_i}|A_i\right.\nonumber\\
&& +\delta^{2s} \int_{I_{\delta\hat{R}_i}^{\ominus}} \left(\int_{B_{\hat{R}_i}} \chi_{\{u\leq k_i\}}(t,x) \operatorname{d}\!x\right)\operatorname{Tail}(u_{-}(t);x_0,2R)\operatorname{d}\!t
\nonumber\\
 &&+\left.\int_{B_{\hat{R}_i}} \left(\int_{I_{\delta\hat{R}_i}^{\ominus}} \chi_{\{u\leq k_i\}}(t,x) \operatorname{d}\!t \right) \operatorname{tail}(u_{-}(x);t_0,2\delta R)\operatorname{d}\!x \right]\nonumber\\
 &&+H|Q_{R_i}|\delta^{2-\frac{2}{q}}B_i^{1+\kappa}R^{-s}\left(\fint_{I_{2R}^{\ominus}}
 \int_{B_{2R}} |f(t,x)|^q\operatorname{d}\!x\operatorname{d}\!t\right)^{\frac{1}{q}}\nonumber\\
 &\leq&C2^{i(d+2s)}R^{-2s}H^2|Q_{R_i}|\left(A_i+B_i^{1+\kappa}\right)\,.
\end{eqnarray}
Applying H\"{o}lder inequality repeatedly, and combining with Young's inequality, Minkowski's inequality and the fractional Sobolev embedding inequality established in Lemma \ref{localfracsobolev}, we deduce that
\begin{eqnarray*}
 && A_{i+1}|I^{\ominus}_{\delta R_{i+1}} \times B_{R_{i+1}}|\\
  &\leq& \int_{I_{\delta R_{i+1}}^{\ominus}} \int_{B_{R_{i+1}}}\frac{w_i^2}{(\hat{k}_i-k_{i+1})^2}\operatorname{d}\!x\operatorname{d}\!t  \\
   &\leq& C2^{2i}H^{-2} \left( A_{i}|I^{\ominus}_{\delta R_{i}} \times B_{R_{i}}|\right)^{\frac{1}{p'}} \left(\int_{I_{\delta R_{i+1}}^{\ominus}} \int_{B_{R_{i+1}}}w_i^{2p}\operatorname{d}\!x\operatorname{d}\!t\right)^{\frac{1}{p}} \\
   &\leq& C2^{2i}H^{-2} \left( A_{i}|I^{\ominus}_{\delta R_{i}} \times B_{R_{i}}|\right)^{\frac{1}{p'}} \left[\int_{I_{\delta R_{i+1}}^{\ominus}} \left(\int_{B_{R_{i+1}}}w_i^{\frac{2d}{d-2s}}\operatorname{d}\!x\right)^{\frac{d-2s}{d}}\operatorname{d}\!t + \int_{B_{R_{i+1}}} \left(\int_{I_{\delta R_{i+1}}^{\ominus}}w_i^{\frac{2}{1-s}}\operatorname{d}\!t\right)^{1-s}\operatorname{d}\!x \right] \\
    &\leq& C2^{2i}H^{-2} \left( A_{i}|I^{\ominus}_{\delta R_{i}} \times B_{R_{i}}|\right)^{\frac{1}{p'}} \left[(\delta R_{i+1})^{-2s}\int_{I_{\delta R_{i+1}}^{\ominus}} \int_{B_{R_{i+1}}}w_i^{2}\operatorname{d}\!x\operatorname{d}\!t \right. \\
    && + \int_{I_{\delta R_{i+1}}^{\ominus}}\int_{B_{R_{i+1}}} \int_{B_{R_{i+1}}}\frac{|w_{i}(t,x)-w_{i}(t,y)|^2}{|x-y|^{d+2s}}\operatorname{d}\! x \operatorname{d}\! y\operatorname{d}\!t \nonumber\\
 &&\left.+\int_{B_{R_{i+1}}}\int_{I_{\delta R_{i+1}}^{\ominus}} \int_{I_{\delta R_{i+1}}^{\ominus}} \frac{|w_{i}(t,x)-w_{i}(\tau,x)|^2}{|t-\tau|^{1+s}}\operatorname{d}\! \tau \operatorname{d}\! t\operatorname{d}\!x \right]\,,
\end{eqnarray*}
where $p=\frac{d+2}{d+2(1-s)}$ and $p'=\frac{d+2}{2s}$ are conjugate exponents.
Then it follows from \eqref{growth1-1} that
\begin{equation}\label{growth1-2}
  A_{i+1}\leq C_3\delta^{\frac{4s}{d+2}-2}2^{(d+2+2s)i}\left(A_i^{1+\frac{2s}{d+2}}+A_i^{\frac{2s}{d+2}}B_i^{1+\kappa}\right),
\end{equation}
where the positive constant $C_3$ only depends on $d$, $s$ and $\Lambda$.
Note that $\frac{q-1}{q(1+\kappa)}=\frac{1}{p}$ by the definitions of $\kappa$ and $p$, then in analogy with the estimates of $A_{i+1}$ in \eqref{growth1-2}, we can further obtain
\begin{equation}\label{growth1-3}
  B_{i+1}\leq C_4\delta^{\frac{4s}{d+2}-2}2^{(d+2+2s)i}\left(A_i+B_i^{1+\kappa}\right),
\end{equation}
where the constant $C_4=C_4(d,s,\Lambda)>0$.
Meanwhile, the assumption \eqref{ularge} implies that
\begin{equation*}
  A_1+B_1^{1+\kappa}=  A_1+A_1^{\frac{q-1}{q}}\leq 2A_1^{\frac{q-1}{q}}\leq  2 \nu^{\frac{q-1}{q}},
\end{equation*}
where $q=\frac{d}{s}$. From this, combining \eqref{growth1-2} with \eqref{growth1-3}, and applying a classical iteration Lemma \ref{iterate2} to derive that there exists $\nu >0$ sufficiently small, depending only on $d,\,s,\,\Lambda$, and $\delta$, such that
the sequences $\{A_i\}$ and $\{B_i\}$ tend to zero as $i\rightarrow \infty$.
Hence, we conclude that
\begin{equation*}
u(t,x) \geq \frac{H}{2} ~~ \text{ in } I^{\ominus}_{\frac{\delta R}{2}} \times B_{\frac{R}{2}},
\end{equation*}
which completes the proof of Lemma \ref{lemma:growth-lemma1}\,.
\end{proof}

In the sequel, we establish a measure shrinking lemma, where the third term of the Caccioppoli inequality \eqref{eq:Cacc} plays an important role in the proof.
\begin{lemma}
\label{lemma:growth-lemma2}
Let $q=\frac{d}{s}$, $\alpha \in (0,1]$ and let
$u$ be a local weak supersolution to \eqref{eq1} in $I\times\Omega$.
For every $R>0$ and $(t_0,x_0)\in I\times\Omega$ with $I^\ominus_{2R}(t_0)\times B_{2R}(x_0)\Subset I\times\Omega$, and any $\delta,\, \sigma \in (0,1]$ and $H > 0$, if $u \ge 0$ in $Q_{2R}(t_0,x_0)$
and
\begin{equation}
\label{eq:measure-ass1}
\left| \left\{ u(t,\cdot) \ge H \right\} \cap B_R(x_0) \right| \ge \alpha |B_R(x_0)| \,\,\mbox{for a.e.}\,\, t \in I_{\delta R}^{\ominus}(t_0),
\end{equation}
then there exists a positive constant $C$ depending only on $d,\,s,\, \Lambda$, such that either
\begin{equation}
\label{Tail-tail-large}
\left(\fint_{I_{2R}^{\ominus}(t_0)} \operatorname{Tail}(u_-(t);x_0,2R)^q \operatorname{d}\! t \right)^{\frac{1}{q}}>\sigma H,\,\, \left(\fint_{B_{2R}^{\ominus}(x_0)} \operatorname{tail}(u_-(x);t_0,2\delta R)^q \operatorname{d}\! x \right)^{\frac{1}{q}}> \sigma H,
\end{equation}
and
\begin{equation}\label{eq:f-large}
R^s\left(\fint_{I_{2R}^{\ominus}(t_0)}\int_{B_{2R}(x_0)} |f(t,x)|^q \operatorname{d}\! x \operatorname{d}\! t \right)^{\frac{1}{q}}>\sigma H,
\end{equation}
or
\begin{equation*}
\left| \left\{ u \le \frac{\sigma H}{4} \right\} \cap I^{\ominus}_{\delta R}(t_0) \times B_R (x_0)\right| \le  \frac{C\sigma}{\delta^{2} \alpha} |I^{\ominus}_{\delta R} (t_0)\times B_R(x_0)|.
\end{equation*}
\end{lemma}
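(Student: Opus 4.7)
The plan is to prove the contrapositive of Lemma \ref{lemma:growth-lemma2}: I assume that all three averaged $L^q$-quantities in \eqref{Tail-tail-large} and \eqref{eq:f-large} are bounded above by $\sigma H$, namely
$$\Bigl(\fint_{I_{2R}^\ominus}\operatorname{Tail}(u_-(t);x_0,2R)^q\,dt\Bigr)^{1/q}\le\sigma H,\quad \Bigl(\fint_{B_{2R}}\operatorname{tail}(u_-(x);t_0,2\delta R)^q\,dx\Bigr)^{1/q}\le\sigma H,$$
$$R^s\Bigl(\fint_{Q_{2R}}|f|^q\,dx\,dt\Bigr)^{1/q}\le\sigma H,$$
and derive the measure estimate. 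The engine of the argument is the third (nonlocal interaction) term of the Caccioppoli inequality \eqref{eq:Cacc}, namely
$$T_3\;:=\;r^{-d-2s}\int_{I_r^\ominus}\int_{B_r}\int_{B_r} w_+(t,x)\,w_-(t,y)\,dx\,dy\,dt,$$
which directly couples the region where $u$ is small to the region where $u$ is large. Unlike Lemma \ref{lemma:growth-lemma1}, a \emph{single} application of Caccioppoli at a suitable truncation level should suffice here.

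Specifically, I will fix the intermediate level $l=\sigma H/2$ and invoke the Caccioppoli inequality for $w_-=(u-l)_-$ (valid for supersolutions by the Remark following Lemma \ref{lemma:Cacc}), with inner cylinder $Q_R(t_0,x_0)$ and outer $Q_{2R}(t_0,x_0)$. Since $\sigma\in(0,1]$, one has $w_+\ge H-l\ge H/2$ on $\{u\ge H\}$ and $w_-\ge l-\sigma H/4=\sigma H/4$ on $\{u\le \sigma H/4\}$. Restricting the outer time-integration in $T_3$ to $I_{\delta R}^\ominus\subset I_R^\ominus$ and invoking the hypothesis \eqref{eq:measure-ass1}, which guarantees $|\{u(t,\cdot)\ge H\}\cap B_R|\ge\alpha|B_R|$ for a.e.\ $t\in I_{\delta R}^\ominus$, I obtain the lower bound
$$T_3\;\gtrsim\;R^{-d-2s}\cdot\tfrac{\sigma H}{4}\cdot\tfrac{H}{2}\cdot\alpha\,|B_R|\cdot\mu\;\sim\;R^{-2s}\,\sigma\alpha H^2\,\mu,\qquad \mu:=|\{u\le\sigma H/4\}\cap I_{\delta R}^\ominus\times B_R|.$$

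For the right-hand side of Caccioppoli I will use the pointwise bound $w_-\le \sigma H/2$. The zeroth-order term contributes $\lesssim R^{-2s}\sigma^2 H^2|Q_{2R}|\sim \sigma^2 H^2 R^{d+2-2s}$. The spatial and temporal tail terms are handled by the elementary pointwise bound $\operatorname{Tail}(w_-)\lesssim l+\operatorname{Tail}(u_-)$ (with the analogous bound for the time-tail, exploiting that $u\ge 0$ on $Q_{2R}$), the scaling identity $\operatorname{tail}(\cdot;t_0,2R)\le \delta^{-2s}\operatorname{tail}(\cdot;t_0,2\delta R)$, and Hölder's inequality with exponents $q=d/s$ and $q'$, exactly in the style of the proof of Lemma \ref{lemma:growth-lemma1}. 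After absorbing the source contribution against the first two LHS-terms of Caccioppoli by means of the fractional Sobolev inequality of Lemma \ref{localfracsobolev} (as done in the proof of Theorem \ref{lemma:locbd}), each RHS contribution is $\lesssim \sigma^2 H^2 R^{d+2-2s}$. Combining,
$$R^{-2s}\sigma\alpha H^2\,\mu\;\lesssim\;\sigma^2 H^2 R^{d+2-2s},\qquad\text{hence}\qquad \mu\;\lesssim\;\frac{\sigma}{\alpha}\,R^{d+2}\;\sim\;\frac{\sigma}{\alpha\delta^2}\,|I_{\delta R}^\ominus\times B_R|,$$
which is exactly the claim.

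The main obstacle will be managing the scale mismatch between the hypothesis and the Caccioppoli framework. The assumption \eqref{eq:measure-ass1} lives on the elongated set $I_{\delta R}^\ominus\times B_R$ (time scale $\delta R$, space scale $R$), whereas the Caccioppoli inequality of Lemma \ref{lemma:Cacc} is tuned to standard parabolic cylinders; the factor $\delta^{-2}$ in the conclusion arises precisely from this mismatch, because we apply Caccioppoli on a cylinder of volume $\sim R^{d+2}$ and normalize against $|I_{\delta R}^\ominus\times B_R|\sim \delta^2 R^{d+2}$. A secondary complication is that only averaged $L^q$-norms of the tails and of $f$ are available, so every tail/source contribution must be paired via Hölder's inequality with the characteristic function of the low set and either absorbed via fractional Sobolev or controlled trivially; this is done in exactly the same way as in the proofs of Lemma \ref{lemma:growth-lemma1} and Theorem \ref{lemma:locbd}.
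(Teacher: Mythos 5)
Your proposal captures the core mechanism of the paper's proof exactly: prove the contrapositive, set $l=\sigma H/2$, lower-bound the product term $T_3$ of the Caccioppoli inequality using the measure hypothesis and the elementary observations $w_+\ge H/2$ on $\{u\ge H\}$ and $w_-\ge\sigma H/4$ on $\{u\le\sigma H/4\}$, then upper-bound the Caccioppoli right-hand side via the pointwise bound $w_-\le\sigma H/2$ and H\"older with exponent $q=d/s$ on the tails and on $f$. That is precisely what the paper does, and a single application of Caccioppoli suffices, as you anticipated.

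There are two places where your write-up drifts slightly from the paper's route and where care is needed. First, the source term: since $w_-\le\sigma H/2$ pointwise in $Q_{2R}$, one simply bounds $\int w_-|f|\lesssim\sigma H\|f\|_{L^q}\,|\text{cylinder}|^{1/q'}$; no Sobolev absorption (of the kind used in the proof of Theorem \ref{lemma:locbd}) is needed here, and the paper does not use it in Lemma \ref{lemma:growth-lemma2}. It would still work, but it is an unnecessary detour. Second, and more substantively, you apply Caccioppoli on the symmetric cylinder with $r=R$, $\rho_1=\rho_2=R$, whereas the paper implicitly uses the anisotropic version (inner time scale $\delta R$, inner space scale $R$, outer $I^\ominus_{2\delta R}\times B_{2R}$), as its explicit output on $\operatorname{tail}(w_-;t_0,2\delta R)$ shows. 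With your symmetric choice, the time-tail term on the right-hand side comes out as $\operatorname{tail}(w_-;t_0,2R)$, and converting it to the scale $2\delta R$ that hypothesis \eqref{eq:tail-small}/\eqref{Tail-tail-large} controls via $\operatorname{tail}(\cdot;t_0,2R)\le\delta^{-2s}\operatorname{tail}(\cdot;t_0,2\delta R)$ costs a factor $\delta^{-2s}$. So your claim that ``each RHS contribution is $\lesssim\sigma^2H^2R^{d+2-2s}$'' is not quite right for this term: you get $\delta^{-2s}\sigma^2H^2R^{d+2-2s}$, and the final bound is $\mu\lesssim\frac{\sigma}{\alpha\delta^{2+2s}}|I^\ominus_{\delta R}\times B_R|$ rather than $\frac{\sigma}{\alpha\delta^{2}}|I^\ominus_{\delta R}\times B_R|$. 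This weaker $\delta$-power is still perfectly adequate for the way the lemma is used in Corollary \ref{cor:growth-lemma} (the choice of $\theta$ there merely absorbs whatever fixed power of $\delta$ appears), but it does not literally reproduce the stated constant. To match the statement exactly one should either re-run the Caccioppoli proof with cut-offs $\eta,\phi$ at the anisotropic scales $\delta R$ and $R$, as the paper does, or accept the modified exponent and adjust the downstream constants.
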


\begin{proof}
Without loss of generality, we may assume that \eqref{Tail-tail-large} and \eqref{eq:f-large} do not hold true. Let $w(t,x) = u(t,x) - \frac{\sigma H}{2}$, then it is easy to verify that $w_-(t,x)\leq u_-(t,x)+\frac{\sigma H}{2}$.
Applying the Caccioppoli inequality \eqref{eq:Cacc} with $l=\frac{\sigma H}{2}$ and utilizing H\"{o}lder's inequality to the tail terms both in time and space, we deduce that
\begin{eqnarray*}
  &&R^{-d-2s}  \int_{I_{\delta R}^{\ominus}} \int_{B_R} \int_{B_R} w_-(t,x) w_+(t,y) \operatorname{d}\!x\operatorname{d}\!y\operatorname{d}\!t  \\
   &\leq&  C (\delta R)^{-2s}\int_{I_{2\delta R}^{\ominus}} \int_{B_{2R}} w^2_{-}(t,x) \operatorname{d}\! x \operatorname{d}\! t \nonumber\\
 &&+ C(\delta R)^{-2s} \int_{B_{2R}} \left(\int_{I_{2\delta R}^{\ominus}} w_{-}(t,x) \operatorname{d}\!t \right) \operatorname{tail}(w_{-}(x);t_0,2\delta R)\operatorname{d}\!x\nonumber\\
&&+ C R^{-2s} \int_{I_{2\delta R}^{\ominus}} \left(\int_{B_{2R}} w_{-}(t,x) \operatorname{d}\!x \right) \operatorname{Tail}(w_{-}(t);x_0,2R)\operatorname{d}\!t\\
&&+\int_{I_{2\delta R}^{\ominus}}\int_{B_{2R}}w_{-}(t,x)|f(t,x)|\operatorname{d}\!x\operatorname{d}\!t  \\
&\leq&C (\sigma H)^2 \left(\delta^{-2s}+\delta^{-2}+\delta^{-2+\frac{2}{q}}\right)  R^{-2s}|I^{\ominus}_{\delta R} \times B_R|\\
&\leq&C (\sigma H)^2 \delta^{-2}R^{-2s}|I^{\ominus}_{\delta R} \times B_R|.
\end{eqnarray*}
Finally, combining the condition \eqref{eq:measure-ass1} with the aforementioned estimates, we arrive at the conclusion that
\begin{eqnarray*}
\left| \left\{ u \le \frac{\sigma H}{4} \right\} \cap I^{\ominus}_{\delta R} \times B_R \right| &\le& c(\sigma H)^{-1} \int_{I_{\delta R}^{\ominus}} \int_{B_R} w_-(t,x) \operatorname{d}\!x\operatorname{d}\!t\\
&\le& c(\sigma\alpha)^{-1}  H^{-2} R^{-d} \int_{I_{\delta R}^{\ominus}} \int_{B_R} \int_{B_R} w_-(t,x) w_+(t,y) \operatorname{d}\!x\operatorname{d}\!y\operatorname{d}\!t\\
&\le&  \frac{C\sigma}{\delta^{2} \alpha} |I^{\ominus}_{\delta R} \times B_R|.
\end{eqnarray*}
Thus, we complete the proof of Lemma \ref{lemma:growth-lemma2}\,.
\end{proof}

An immediate consequence of Lemma \ref{lemma:growth-lemma1} and Lemma \ref{lemma:growth-lemma2} is the derivation of the following growth lemma, which is an essential
ingredient in proving H\"{o}lder continuity.

\begin{corollary}
\label{cor:growth-lemma}
Let $d>2$, $q=\frac{d}{s}$, $\alpha \in (0,1]$, and let $u$ be a local weak supersolution to \eqref{eq1} in $I\times\Omega$.
For every $R>0$ and $(t_0,x_0)\in I\times\Omega$ with $I^\ominus_{2R}(t_0)\times B_{2R}(x_0)\Subset I\times\Omega$, any $H > 0$, if
\begin{equation}\label{corcon1}
\left| \left\{ u(t,x) \ge H \right\} \cap I_R^\ominus(t_0)\times B_R(x_0) \right| \ge \alpha |I_R^\ominus(t_0)\times B_R(x_0)|,
\end{equation}
then there exist
$\delta,\,\theta \in (0,1)$ depending only on $d,\,s,\,\Lambda,\,\alpha$, and $t_1\in I_R^\ominus(t_0)$ with $I_{\delta R}^\ominus(t_1)\subset I_R^\ominus(t_0)$, such that
\begin{equation*}
u(t,x) \ge \frac{\theta H}{8} ~~ \text{ in } I_{\frac{\delta R}{2}}^{\ominus} (t_1) \times B_{\frac{R}{2}}(x_0)
\end{equation*}
holds, provided  $u \ge 0$ in $I^\ominus_{2R}(t_1)\times B_{2R}(x_0)\Subset I\times\Omega$,
\begin{equation}\label{Tail-tail-small}
\left(\fint_{I_{2R}^{\ominus}(t_1)} \operatorname{Tail}(u_-(t);x_0,2R)^q \operatorname{d}\! t \right)^{\frac{1}{q}} \leq \frac{\theta H}{4}, \,\,\left(\fint_{B_{2R}(x_0)} \operatorname{tail}(u_-(x);t_1,2\delta R)^q \operatorname{d}\! x \right)^{\frac{1}{q}}\leq \frac{\theta H}{4},
\end{equation}
and
\begin{equation}\label{f-small}
R^s\left(\fint_{I_{2R}^{\ominus}(t_1)}\int_{B_{2R}(x_0)} |f(t,x)|^q \operatorname{d}\! x \operatorname{d}\! t \right)^{\frac{1}{q}}\leq\frac{\theta H}{4}.
\end{equation}
\end{corollary}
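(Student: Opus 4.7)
The plan is to chain Lemma \ref{lemma:growth-lemma1} onto Lemma \ref{lemma:growth-lemma2} by means of a pigeonhole in time. The hypothesis \eqref{corcon1} only supplies an overall measure fraction $\alpha$ of the super-level set $\{u \ge H\}$ inside $I_R^\ominus(t_0) \times B_R(x_0)$, while Lemma \ref{lemma:growth-lemma1} requires, at the reduced level $\theta H /4$, an almost-filled inner cylinder. Lemma \ref{lemma:growth-lemma2} is precisely the tool that converts slice-wise positivity at level $H$ into an overall measure smallness at a reduced level, so the argument hinges on extracting a usable slice-wise positivity condition from \eqref{corcon1}.

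Concretely, I would proceed as follows. Fix $\delta \in (0,1)$ to be determined from $d, s, \Lambda, \alpha$. First, partition $I_R^\ominus(t_0) = (t_0 - R^2, t_0)$ into $\lceil \delta^{-2}\rceil$ consecutive sub-intervals of length $(\delta R)^2$; pigeonhole together with \eqref{corcon1} gives at least one sub-interval $I_{\delta R}^\ominus(t_1) \subset I_R^\ominus(t_0)$ on which
\[
\bigl|\{u \ge H\} \cap I_{\delta R}^\ominus(t_1) \times B_R(x_0)\bigr| \;\ge\; \alpha \,\bigl|I_{\delta R}^\ominus(t_1) \times B_R(x_0)\bigr|.
\]
Chebyshev then furnishes a Lebesgue set $\mathcal T \subset I_{\delta R}^\ominus(t_1)$ with $|\mathcal T| \ge \alpha' (\delta R)^2$, where $\alpha' = \alpha/(2-\alpha)$, on which $|\{u(t,\cdot) \ge H\} \cap B_R(x_0)| \ge \alpha' |B_R(x_0)|$. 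Next, I would rerun the short proof of Lemma \ref{lemma:growth-lemma2} with $\alpha'$ in place of $\alpha$, splitting the relevant time integrals over $\mathcal T$ and its complement: on $\mathcal T$ the bilinear lower bound $\int_{B_R} w_+(t,y)\,\operatorname{d}\! y \ge (\alpha' H /2)|B_R|$ used in that proof applies verbatim, whereas on the complement the universal bound $w_- \le \sigma H /2$ controls the error by a term of relative size $\sigma$. Taking $\sigma = \sigma(d, s, \Lambda, \alpha)$ sufficiently small (and, if necessary, iterating on a short geometric sequence of levels $H, H/2, H/4, \dots$) leads to
\[
\bigl|\{u \le \sigma H/4\} \cap I_{\delta R}^\ominus(t_1) \times B_R(x_0)\bigr| \;\le\; \nu\, \bigl|I_{\delta R}^\ominus(t_1) \times B_R(x_0)\bigr|,
\]
where $\nu = \nu(d, s, \Lambda, \delta)$ is the smallness constant of Lemma \ref{lemma:growth-lemma1}. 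Setting $\theta := \sigma$, the hypotheses \eqref{Tail-tail-small} and \eqref{f-small} translate, up to absolute constants, into \eqref{eq:Tail-small}--\eqref{eq:f-small} at the reduced level $\sigma H/4$; Lemma \ref{lemma:growth-lemma1} then yields $u \ge \sigma H/8 = \theta H/8$ in $I_{\delta R/2}^\ominus(t_1) \times B_{R/2}(x_0)$, as required.

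The principal obstacle is the passage from pigeonhole-produced positive density of good time slices to a slice-wise positivity that can be fed into Lemma \ref{lemma:growth-lemma2}, whose hypothesis is stated almost everywhere in time. In classical second-order parabolic De Giorgi theory this gap is bridged by forward-in-time energy propagation of the slice measure, but the Marchaud derivative, which looks only into the past, offers no such device. I therefore expect the argument to absorb the "bad" time slices via the crude bound $w_- \le \sigma H/2$ and to balance the resulting relative error of order $\sigma$ against the gain in the bilinear term, which is what dictates the parameter dependencies $\sigma = \sigma(d,s,\Lambda,\alpha)$ and $\delta = \delta(d,s,\Lambda,\alpha)$ in the conclusion.
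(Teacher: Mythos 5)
Your route diverges from the paper's. The paper defines
$\Sigma:=\{t\in I_R^\ominus(t_0):\,|\{u(t,\cdot)\ge H\}\cap B_R|> \tfrac{\alpha}{2}|B_R|\}$,
shows $|\Sigma|\gtrsim\alpha|I_R^\ominus|$, and then simply asserts that one can find $t_1$ and $\delta$ with $I_{\delta R}^\ominus(t_1)\subset\Sigma$, so that the slice-wise hypothesis \eqref{eq:measure-ass1} of Lemma~\ref{lemma:growth-lemma2} holds for a.e.\ $t$ in the inner interval and both lemmas apply directly. You instead use a pigeonhole over sub-intervals plus Chebyshev to produce a set $\mathcal{T}\subset I_{\delta R}^\ominus(t_1)$ of positive proportion on which the slice condition holds, and then propose to modify the proof of Lemma~\ref{lemma:growth-lemma2} by splitting the time integral into $\mathcal{T}$ and $\mathcal{T}^c$. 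This is a genuinely different strategy, and you have put your finger on the right pressure point: without forward-in-time propagation (unavailable for the Marchaud derivative), one cannot upgrade ``$\{u\ge H\}$ has density $\alpha$ in the cylinder'' to ``a.e.\ slice in a sub-interval has density $\alpha/2$'' by elementary means alone.

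However, the proposed absorption of the complementary slices does not close the gap. In Lemma~\ref{lemma:growth-lemma2}'s scheme one controls
$|\{u\le\sigma H/4\}\cap I_{\delta R}^\ominus\times B_R|$ via
$\frac{4}{\sigma H}\int_{I_{\delta R}^\ominus}\int_{B_R}w_-$, and on $\mathcal{T}^c$ the only available bound is $w_-\le\sigma H/2$, giving a contribution
$\frac{4}{\sigma H}\cdot\frac{\sigma H}{2}\,|B_R|\,|\mathcal{T}^c| = 2\,|B_R|\,|\mathcal{T}^c|$.
But $|\mathcal{T}^c|$ is a \emph{definite} fraction of $|I_{\delta R}^\ominus|$ (roughly $1-c\alpha$), so this term is of order $|I_{\delta R}^\ominus\times B_R|$, not of order $\sigma\,|I_{\delta R}^\ominus\times B_R|$ as your ``relative error of size $\sigma$'' would require. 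Shrinking $\sigma$ has no effect, because $\mathcal{T}$ and $\mathcal{T}^c$ depend only on $H$ and $B_R$, not on the truncation level; hence iterating over the levels $H, H/2, H/4,\dots$ also leaves $|\mathcal{T}^c|$ unchanged. To make the argument go through you would need a mechanism that forces $\{u\le\sigma H/4\}$ itself (not just $\int w_-$) to be small on $\mathcal{T}^c$, which the Caccioppoli bilinear term alone does not provide. Until that is supplied, the step from ``positive density of good slices'' to ``smallness in the sense of \eqref{ularge}'' is missing, and the corollary is not proved by this route.
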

\begin{proof}
Let
$$\Sigma:=\big\{t\in I_R^\ominus(t_0)\mid |\{u(\cdot, x)\geq H\}\cap B_R(x_0)|> \frac{\alpha}{2}|B_R(x_0)|\big\},$$
using the proof by contradiction in conjunction with \eqref{corcon1}, it is easy to deduce that $|\Sigma|\geq\alpha |I_R^\ominus(t_0)|$. Then there exist $t_1\in I_R^\ominus(t_0)$ and $\delta$ depending on $\alpha$ such that the time slice  $I_{\delta R}^\ominus(t_1)\subset \Sigma$ and
\begin{equation*}
\left| \left\{ u(t,\cdot) \ge H \right\} \cap B_R(x_0) \right| \ge \frac{\alpha}{2} |B_R(x_0)| \,\,\mbox{for a.e.}\,\, t \in I_{\delta R}^{\ominus}(t_1).
\end{equation*}
From this, Lemma \ref{lemma:growth-lemma2} implies that
\begin{equation*}
\left| \left\{ u \le \frac{\theta H}{4} \right\} \cap I^{\ominus}_{\delta R}(t_1) \times B_R(x_0) \right| \le  \frac{C_5\theta}{\delta^{2} \alpha} |I^{\ominus}_{\delta R}(t_1) \times B_R(x_0)|,
\end{equation*}
where the constant $C_5=C_5(d,s,\Lambda)>1$. We further
choose $\theta\leq C_5^{-1}\delta^2\alpha\nu$ such that
\begin{equation*}
\left| \left\{ u \le \frac{\theta H}{4} \right\} \cap I^{\ominus}_{\delta R}(t_1) \times B_R(x_0) \right| \le  \nu |I^{\ominus}_{\delta R}(t_1) \times B_R(x_0)|,
\end{equation*}
where $\nu$ appears in \eqref{ularge}, then it follows from the dependence of $\nu$ that $\theta$ depends only on $d,\,s,\,\Lambda,\,\alpha$.
Hence, by virtue of Lemma \ref{lemma:growth-lemma1}\,, we conclude that
\begin{equation*}
u(t,x) \ge \frac{\theta H}{8} ~~ \text{ in } I_{\frac{\delta R}{2}}^{\ominus}(t_1) \times B_{\frac{R}{2}}(x_0),
\end{equation*}
which completes the proof of Corollary \ref{cor:growth-lemma}\,.
\end{proof}

We are now ready to prove the H\"{o}lder regularity established in Theorem \ref{prop:Holderq}.

\begin{proof}[Proof of Theorem \ref{prop:Holderq}]		
The proof goes by constructing two sequences $\{M_j\}$ and $\{m_j\}$ that are non-increasing and non-decreasing, respectively, and to select a small $\gamma \in (0,1)$ and a large $\nu > 1$ such that
\begin{equation}
\label{eq:osc-decay}
m_j \le u \le M_j ~~ \text{ in } Q_{\nu^{-j} R}(t_0,x_0), ~~ \text{ and }~~ M_j - m_j = L \nu^{-\gamma j},
\end{equation}
for any $j \in \mathbb{N}$,
where
\begin{eqnarray*}
L :&=& C_6 \Vert u \Vert_{L^{\infty}(Q_R(t_0,x_0))} + \left( \fint_{I_{2R}^{\ominus}(t_0)} \operatorname{Tail}(u(t);x_0,2R)^{\frac{d}{s}} \operatorname{d}\! t  \right)^{\frac{s}{d}}+\left( \fint_{B_{2R}(x_0)} \operatorname{tail}(u(x);t_0,2R)^{\frac{d}{s}}  \operatorname{d}\! x \right)^{\frac{s}{d}}\\
&&R^s\left(\fint_{I_{2R}^{\ominus}(t_0)}\int_{B_{2R}(x_0)} |f(t,x)|^{\frac{d}{s}}  \operatorname{d}\! x \operatorname{d}\! t \right)^{\frac{s}{d}}
\end{eqnarray*}
for some positive constant $C_6$ to be determined later. Once \eqref{eq:osc-decay} is achieved, we immediately obtain the desired result by using the definition of $L$ and Lemma \ref{lemma:locbd}\,.

For any fixed $j_0$, if we set $M_j = \frac{\nu^{-\gamma j}L}{2}$ and $m_j = - \frac{\nu^{-\gamma j}L}{2}$, and choose $C_6 \ge 2 \nu^{\gamma j_0}$, then it is not difficult to verify that \eqref{eq:osc-decay} is valid for every $j\leq j_0$.
In order to show that \eqref{eq:osc-decay} also holds true for $j>j_0$, we need to construct appropriate $M_{j+1}$ and $m_{j+1}$, and proceed by induction. Assuming that \eqref{eq:osc-decay} is valid up to some $j\geq j_0$, we will show that \eqref{eq:osc-decay} must hold true for $j+1$. To proceed,
without loss of generality, we suppose that
\begin{equation}
\label{holder1}
\big|\{ u(t ,x) \ge m_j + \frac{M_j - m_j}{2} \} \cap Q_{\nu^{-j} R}(\bar t,x_0)\big| \ge \frac{1}{2}|Q_{\nu^{-j}R}(\bar {t},x_0)|
\end{equation}
for some $\bar{t}\in (t_0,t_0+(\nu^{-j}R)^2)$, such that the above $t_0$ acts as $t_1$ in Corollary \ref{cor:growth-lemma}\,.
Through a straightforward calculation, it becomes evident that
\begin{equation*}
\left(\fint_{I_{2\nu^{-j}R}^{\ominus}(t_0)} \operatorname{Tail}((u-m_j)_-(t);x_0,2\nu^{-j}R)^{\frac{d}{s}} \operatorname{d}\! t \right)^{\frac{s}{d}} \leq \frac{\theta (M_j - m_j)}{8},
\end{equation*}
\begin{equation*}
\left(\fint_{B_{2\nu^{-j}R}(x_0)} \operatorname{tail}((u-m_j)_-(x);t_0,2\delta\nu^{-j} R)^{\frac{d}{s}} \operatorname{d}\! x \right)^{\frac{s}{d}}\leq \frac{\theta (M_j - m_j)}{8},
\end{equation*}
and
\begin{equation*}
  (2\nu^{-j}R)^s\left(\fint_{I_{2\nu^{-j}R}^{\ominus}(t_0)}\int_{B_{2\nu^{-j}R}(x_0)} |f(t,x)|^{\frac{d}{s}}  \operatorname{d}\! x \operatorname{d}\! t \right)^{\frac{s}{d}}\leq \frac{\theta (M_j - m_j)}{8}
\end{equation*}
can fulfill by first choosing a sufficiently small $\gamma<\min\big\{\frac{4s}{d+2},\frac{(d-2)s}{d}\big\}$ that depends only on $d,\,s,\,\theta$, and then selecting a sufficiently large $\nu>\frac{2}{\delta}$ that is dependent on $d,\,s,\,\theta$.
Thus, applying Corollary \ref{cor:growth-lemma} with $u:=u-m_j$, $R:=\nu^{-j} R$, $\alpha=\frac{1}{2}$, and $H:= \frac{M_j - m_j}{2}=\frac{L\nu^{-\gamma j}}{2}$, we deduce that
\begin{equation}\label{holder2}
u(t,x) \ge m_j+\frac{\theta L\nu^{-\gamma j}}{16} ~~ \text{ in } Q_{\nu^{-(j+1)}R}(t_0,x_0),
\end{equation}
where $\theta=\theta(d,s,\Lambda)\in(0,1)$. Let
\begin{equation*}
  M_{j+1}=M_j \,\,\mbox{and}\,\, m_{j+1}=M_j-L\nu^{-\gamma(j+1)},
\end{equation*}
we further select $\gamma$ to be sufficiently small such that $\gamma\leq\log_\nu(\frac{16}{16-\theta})$ to ensure that the chosen $M_{j+1}$ and $m_{j+1}$ satisfy \eqref{eq:osc-decay}.

On the other hand, if \eqref{holder1} fails, we proceed analogously but instead apply Corollary \ref{cor:growth-lemma} with $u: = M_j - u$. In this case, setting $M_{j+1}=L\nu^{-\gamma(j+1)}+m_j$ and $m_{j+1}=m_j$ as desired, then we conclude the proof of Theorem \ref{prop:Holderq}\,.
\end{proof}

\section*{Acknowledgments} This work is supported by the National Natural Science Foundation of China (NSFC Grant No.12101452 and No.12071229).

\section*{Data availability} Data sharing is not applicable to this article as obviously no datasets were generated or analyzed during the current study.

\bibliography{bibliography}

\end{document}